\DeclareMathOperator{\Img}{Im}
\DeclareMathOperator{\Ker}{Ker}
\DeclareMathOperator{\ab}{ab}
\DeclareFontFamily{U}{wncy}{}
\DeclareFontShape{U}{wncy}{m}{n}{<->wncyr10}{}
\DeclareSymbolFont{mcy}{U}{wncy}{m}{n}
\DeclareMathSymbol{\Sha}{\mathord}{mcy}{"58}
\DeclareMathSymbol{\sha}{\mathord}{mcy}{"78}
\begin{document}

\newtheorem{thm}{Theorem}[section]
\newtheorem{cor}[thm]{Corollary}
\newtheorem{lem}[thm]{Lemma}
\newtheorem{fact}[thm]{Fact}
\newtheorem{prop}[thm]{Proposition}
\newtheorem{defin}[thm]{Definition}
\newtheorem{exam}[thm]{Example}
\newtheorem{examples}[thm]{Examples}
\newtheorem{rem}[thm]{Remark}
\newtheorem{case}{\sl Case}
\newtheorem{claim}{Claim}
\newtheorem{question}[thm]{Question}
\newtheorem{conj}[thm]{Conjecture}
\newtheorem*{notation}{Notation}
\swapnumbers
\newtheorem{rems}[thm]{Remarks}
\newtheorem*{acknowledgment}{Acknowledgments}
\newtheorem*{thmno}{Theorem}

\newtheorem{questions}[thm]{Questions}
\numberwithin{equation}{section}

\newcommand{\gr}{\mathrm{gr}}
\newcommand{\inv}{^{-1}}
\newcommand{\isom}{\cong}
\newcommand{\dbC}{\mathbb{C}}
\newcommand{\F}{\mathbb{F}}
\newcommand{\dbN}{\mathbb{N}}
\newcommand{\Q}{\mathbb{Q}}
\newcommand{\dbR}{\mathbb{R}}
\newcommand{\dbU}{\mathbb{U}}
\newcommand{\Z}{\mathbb{Z}}
\newcommand{\calG}{\mathcal{G}}
\newcommand{\calX}{\mathcal{X}}
\newcommand{\K}{\mathbb{K}}
\newcommand{\rmH}{\mathrm{H}}
\newcommand{\bfH}{\mathbf{H}}
\newcommand{\rmr}{\mathrm{r}}
\newcommand{\Span}{\mathrm{Span}}
\newcommand{\eue}{\mathbf{e}}


\newcommand{\hac}{\hat c}
\newcommand{\hatheta}{\hat\theta}

\title[Massey products and Pythagorean fields]{Massey products in Galois cohomology \\ and Pythagorean fields}

\author{Claudio Quadrelli}
\address{Department of Science \& High-Tech, University of Insubria, Como, Italy EU}
\email{claudio.quadrelli@uninsubria.it}
\date{\today}

\begin{abstract}
We prove that a strengthened version of Mina\v{c}--T\^an's Massey Vanishing Conjecture holds true for
fields with a finite number of square classes whose maximal pro-$2$ Galois group is of elementary type (as defined by I.~Efrat).
In particular, this proves Mina\v{c}--T\^an's Massey Vanishing Conjecture for Pythagorean fields with a finite number of square classes and their finite extensions.
\end{abstract}

\subjclass[2010]{Primary 12G05; Secondary 20E18, 20J06, 12F10}

\keywords{Galois cohomology, Massey products, Pythagorean fields, absolute Galois groups, elementary type conjecture}

\maketitle

\section{Introduction}
\label{sec:intro}

One of the topics which gained great interest in Galois theory, in recent years, is the study of {\sl Massey products} in Galois cohomology.
Let $G$ be a pro-$p$ group --- where $p$ is a prime number ---, and let $\F_p$ denote the finite field with $p$ elements considered as a trivial $G$-module.
For $n\geq2$, the {\sl $n$-fold Massey product} with respect to $\F_p$ is a multi-valued map from the first $\F_p$-cohomology group $\rmH^1(G,\F_p)$ to the second $\F_p$-cohomology group $\rmH^2(G,\F_p)$: namely, it associates a sequence of length $n$ of elements $\alpha_1,\ldots,\alpha_n$ (non-necessarily all distinct to each other) of $\rmH^1(G,\F_p)$ to a (possibly empty) subset of $\rmH^2(G,\F_p)$, denoted by $\langle\alpha_1,\ldots,\alpha_n\rangle$.

Given a sequence $\alpha_1,\ldots,\alpha_n$ of elements of $\rmH^1(G,\F_p)$, the associated $n$-fold Massey product $\langle\alpha_1,\ldots,\alpha_n\rangle$ is said to be {\sl defined} if it is not the empty set, and moreover it is said to {\sl vanish} if $0\in\langle\alpha_1,\ldots,\alpha_n\rangle$.
A pro-$p$ group is said to have the {\sl $n$-Massey vanishing property} with respect to $\F_p$ if, for any sequence $\alpha_1,\ldots,\alpha_n$ of elements of $\rmH^1(G,\F_p)$, the $n$-fold Massey product $\langle\alpha_1,\ldots,\alpha_n\rangle$ vanishes whenever it is defined.

Given a field $\K$, let $G_{\K}(p)$ denote the {\sl maximal pro-$p$ Galois group} of $\K$ --- namely, $G_{\K}(p)$ is the maximal pro-$p$ quotient of the absolute Galois group of $\K$.
The recent hectic research on Massey products in Galois cohomology started with the paper \cite{hopwick}, where M.J.~Hopkins and K.G.~Wickelgren proved that if $\K$ is a global field of characteristic not 2, then its maximal pro-2 Galois group $G_{\K}(2)$ has the $3$-Massey vanishing property, and asked whether this is true for any field of characteristic not 2.

Subsequently, J.~Mina\v{c} and N.D.~T\^an conjectured the following (cf. \cite{mt:conj}).

\begin{conj}\label{conj:MvanMT}
 Let $\K$ be a field containing a root of 1 of order $p$.
 Then the maximal pro-$p$ Galois group $G_{\K}(p)$ of $\K$ satisfies the $n$-Massey vanishing property for every $n>2$.
\end{conj}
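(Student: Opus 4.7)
The plan is to prove the conjecture in the restricted setting handled by the paper, namely when $\K$ has only finitely many square classes and $G_\K(2)$ is of elementary type in Efrat's sense. In general Conjecture~\ref{conj:MvanMT} is open, so the realistic target is this special case, which already subsumes Pythagorean $\K$ with finitely many square classes and their finite extensions (since such a Galois group is a free pro-$2$ product of copies of $\Z/2\Z$ and a possible $\Z_2$-factor). The overall scheme is an induction on the ``syntactic complexity'' of the elementary type decomposition: verify Massey vanishing for the atomic pieces of the classification, and then show that the property is preserved under the two operations used to assemble more complicated examples, namely the free pro-$2$ product and the cyclotomic semidirect product $H\rtimes\Z_2$ which models the adjunction of $2$-power roots of unity.

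For the atomic pieces: free pro-$2$ groups have $\rmH^2=0$, so every defined Massey product lands in the zero group and trivially vanishes; the same analysis handles $\Z_2$. For Demushkin pro-$2$ groups I would use the explicit one-relator pro-$2$ presentation to unwind a given defining system for an $n$-fold product $\langle\alpha_1,\ldots,\alpha_n\rangle$ along the relator and produce an explicit trivializing cochain. Crucially, throughout the induction one does not verify bare vanishing but a \emph{strengthened} form (prescribing, say, the first and last rows of the defining system), because this is what the inductive steps can transport. Choosing this strengthening is itself part of the strategy: it must be strong enough to propagate through the cyclotomic construction yet weak enough to be checkable on the atoms.

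The second step is the inductive one. The free pro-$2$ product case is the milder: $\rmH^\bullet$ splits as a direct sum over the factors, and any defining system on $G_1\ast G_2$ decomposes into defining systems on the factors, to which the inductive hypothesis applies. For the cyclotomic semidirect product $H\rtimes\Z_2$ I would analyse the Lyndon--Hochschild--Serre spectral sequence associated to $1\to H\to H\rtimes\Z_2\to\Z_2\to 1$: since $\rmH^{\geq 2}(\Z_2,-)=0$ the $E_2$-page is very small, which allows one to transfer a defining system across inflation--restriction, the twisting by $\Z_2$ being absorbed by the strengthened hypothesis on $H$. Combining both stability results with the atomic calculations then finishes the elementary type case by induction, and applying it to the free pro-$2$ product of copies of $\Z/2\Z$ yields the Pythagorean case. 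Finite extensions are handled by observing that open subgroups within this class still admit an elementary type decomposition.

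The main obstacle will be the cyclotomic semidirect product step. Unlike cup products, Massey products are multi-valued and their indeterminacy interacts badly with the extension $H\to H\rtimes\Z_2\to\Z_2$: spectral sequence arguments control cup products functorially but do not, by themselves, give a functorial control of higher Massey products. The real technical work is to pin down the precise strengthened vanishing statement that is stable under both operations, and to execute the cocycle manipulation on $H\rtimes\Z_2$ that absorbs the $\Z_2$-twist into the inductive hypothesis on $H$. Everything else in the argument is, by comparison, bookkeeping over the elementary type recursion.
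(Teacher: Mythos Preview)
Your inductive skeleton matches the paper's proof of Theorem~\ref{thm:ET intro}, but there is a structural error and the hard step is left as an acknowledged gap. In the elementary type construction (Definition~\ref{defin:ET}) the semidirect product is $\Z_2\rtimes_\theta G_0$ with $\Z_2$ \emph{normal} and $G_0$ the quotient; the relevant extension is $1\to\Z_2\to G\to G_0\to 1$, not $1\to H\to H\rtimes\Z_2\to\Z_2\to 1$. Your spectral-sequence plan, which exploits $\rmH^{\geq2}(\Z_2,-)=0$ for a \emph{quotient} $\Z_2$, therefore targets the wrong extension. Relatedly, the maximal pro-$2$ Galois group of a Pythagorean field with finitely many square classes is not merely a free pro-$2$ product of copies of $\Z/2$ with a possible $\Z_2$-factor: it lies in the family $\mathcal{EE}_2$, built from copies of $\Z/2$ via \emph{both} free products and iterated operations $\Z_2\rtimes_\theta(-)$ (see Proposition~\ref{prop}), so the semidirect-product step cannot be bypassed even in the Pythagorean case.

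The paper's treatment of that step is also entirely different from a spectral-sequence transfer. The strengthening carried through the induction is the specific \emph{strong} $n$-Massey vanishing property---condition~\eqref{eq:cup 0} on consecutive cup products forces vanishing---rather than an ad hoc constraint on rows of a defining system. For $G=\Z_2\rtimes_\theta G_0$, Lemma~\ref{lem:semidirect alpha} uses the decomposition~\eqref{eq:H2 semidirect} to show that a sequence $\alpha_1,\ldots,\alpha_n$ satisfying \eqref{eq:cup 0} is either pulled back from $\rmH^1(G_0)$ or alternates as $\alpha,\alpha+\chi_0,\alpha,\ldots$ for a distinguished class $\chi_0$. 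The first case reduces to $G_0$; for the second, the paper uses Dwyer's translation (Proposition~\ref{prop:masse unip}) and constructs an explicit homomorphism $\rho\colon G\to\dbU_{n+1}$ by hand, assigning to each generator a concrete matrix built from matrices $A,B\in\dbU_{n+1}$ with $[B,A]=A^{-2}$ extracted from the infinite dihedral pro-$2$ group (Lemma~\ref{lemma:B2 I}) together with correction matrices $C(\mu)\in(\dbU_{n+1})_{(3)}$ from Lemma~\ref{lemma:BAC}. This explicit unipotent-representation construction, not a cohomological transfer argument, is the core of Proposition~\ref{prop:semidirect prod}.
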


In the meanwhile, the following remarkable results have been obtained: E.~Matzri proved that $G_{\K}(p)$ has the $3$-Massey vanishing property if $\K$ contains a root of 1 of order $p$ (see the preprint \cite{eli:Massey}, see also the published works \cite{EM:Massey,MT:Masseyall}); J.~Mina\v{c} and N.D.~T\^an proved Conjecture~\ref{conj:MvanMT} for local fields (see \cite{mt:Massey}); Y.~Harpaz and O.~Wittenberg proved Conjecture~\ref{conj:MvanMT} for number fields (see \cite{HW:Massey}); A.~Merkurjev and F.~Scavia proved that the maximal pro-$2$ Galois group of every field satisfies the $4$-Massey vanishing property (see \cite{MerSca1}).
Further interesting results on Massey products in Galois cohomology have been obtained by various authors (see, e.g., \cite{eli2,eli3,wick,jochen,GPM,PJ,LLSWW}).
For an overview on Massey products in Galois cohomology, we direct the reader to \cite{mt:Massey}.

In this work, we focus on a strong version of the Massey vanishing property, introduced by A.~P\'al and E.~Sz\'abo in \cite{pal:Massey}.
If the $n$-fold Massey product $\langle\alpha_1,\ldots,\alpha_n\rangle$, associated to a sequence $\alpha_1,\ldots,\alpha_n$ of elements of the first $\F_p$-cohomology group of a pro-$p$ group $G$, is defined, then necessarily one has
\begin{equation}\label{eq:cup 0}
 \alpha_1\smallsmile\alpha_2=\ldots=\alpha_{n-1}\smallsmile\alpha_n=0,
\end{equation}
where $\textvisiblespace\smallsmile\textvisiblespace$ denotes the {\sl cup-product} (see, e.g., \cite[Rem.~2.2]{cq:massey}).
A pro-$p$ group $G$ is said to have the {\sl strong} $n$-Massey vanishing property if every sequence $\alpha_1,\ldots,\alpha_n$ of elements of $\rmH^1(G,\F_p)$ satisfying \eqref{eq:cup 0} yields an $n$-fold Massey product $\langle\alpha_1,\ldots,\alpha_n\rangle$ which vanishes.
 
It has been show by A.~P\'al and G.~Quick that the maximal pro-2 Galois group of a field with virtual cohomological dimension at most 1 satisfies the strong $n$-Massey vanishing property with respect to $\F_2$ for every $n>2$ (see \cite{palquick:Massey}).
Our main result is to show that the same holds for {\sl Pythagorean fields}.

\begin{thm}\label{thm:main}
 Let $\K$ be a Pythagorean field with finitely many square classes, and let $\mathbb{L}/\K$ be an extension of finite degree.
 Then the maximal pro-2 Galois group $G_{\mathbb{L}}(2)$ of $\mathbb{L}$ satisfies the strong $n$-Massey vanishing property with respect to $\F_2$ for every $n>2$.
\end{thm}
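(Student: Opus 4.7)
The plan is to deduce Theorem~\ref{thm:main} from the principal result of the paper (stated in the abstract), which yields the strong $n$-Massey vanishing property for every field with finitely many square classes whose maximal pro-2 Galois group is of elementary type in Efrat's sense. The task therefore reduces to showing that, for every finite extension $\mathbb{L}/\K$ of a Pythagorean field $\K$ with $|\K^{\times}/(\K^{\times})^{2}|<\infty$, the field $\mathbb{L}$ has finitely many square classes and $G_{\mathbb{L}}(2)$ is of elementary type.

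For the base case $\mathbb{L}=\K$, I would invoke the classical theorem of Jacob--Ware (building on earlier work of Becker): the maximal pro-2 Galois group of a Pythagorean field with finitely many square classes is isomorphic to a free pro-2 product of a free pro-2 group of finite rank together with finitely many copies of $\Z/2\Z$, one for each ordering of $\K$. Such a group is the simplest non-trivial example of an elementary-type pro-2 group, so the base case is immediate.

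For the finite-extension step, I would split $\mathbb{L}/\K$ through the largest intermediate 2-extension $\mathbb{M}=\mathbb{L}\cap\K(2)$ of $\K$, so that $[\mathbb{L}:\mathbb{M}]$ is odd. For the 2-extension $\mathbb{M}/\K$, the group $G_{\mathbb{M}}(2)$ is an open subgroup of $G_{\K}(2)$; by the pro-2 Kurosh subgroup theorem of Mel'nikov--Haran, it is again a free pro-2 product of copies of $\Z/2\Z$ and a free pro-2 group of finite rank, hence of elementary type, and $\mathbb{M}$ clearly retains finitely many square classes. For the odd-degree extension $\mathbb{L}/\mathbb{M}$, since $\mathbb{L}\cap\mathbb{M}(2)=\mathbb{M}$ (any proper 2-subextension of $\mathbb{L}/\mathbb{M}$ would force an even factor in $[\mathbb{L}:\mathbb{M}]$), the natural map $G_{\mathbb{L}}(2)\to\Gal(\mathbb{L}\cdot\mathbb{M}(2)/\mathbb{L})\cong G_{\mathbb{M}}(2)$ is surjective, and the task is to show that $G_{\mathbb{L}}(2)$ itself admits the same elementary-type decomposition.

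The chief technical difficulty I expect is precisely this odd-degree step: one must argue either that the kernel of $G_{\mathbb{L}}(2)\to G_{\mathbb{M}}(2)$ is trivial (i.e.\ $\mathbb{L}\cdot\mathbb{M}(2)$ has no further pro-2 extensions of $\mathbb{L}$), or that $G_{\mathbb{L}}(2)$ is in any case a free pro-2 product of copies of $\Z/2\Z$ and a free pro-2 group of finite rank. The former would follow from a general closure statement for Efrat's elementary-type class under odd-degree base-field extensions; the latter can be approached through the cohomological characterization of such free pro-2 products together with the fact that restriction through an odd-index subgroup is injective on $\F_{2}$-cohomology. With this in hand, Theorem~\ref{thm:main} is immediate from the general elementary-type result proved earlier in the paper.
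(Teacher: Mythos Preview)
Your structural input is wrong: the maximal pro-$2$ Galois group of a Pythagorean field $\K$ with finitely many square classes is \emph{not} in general a free pro-$2$ product of copies of $\Z/2$ with a finitely generated free pro-$2$ group. Take $\K=\dbR((t_1))((t_2))$, which is Pythagorean with $|\K^\times/(\K^\times)^2|=8$; here $G_{\K}(2)\cong\Z_2^{\,2}\rtimes\Z/2$ (inversion action), which contains $\Z_2\times\Z_2$ and so is not virtually free pro-$2$, hence not a free product of the asserted shape. The correct statement, due to Mina\v{c} and used in the paper (Proposition~\ref{prop}), is that $(G_{\K}(2),\theta_{\K})\in\mathcal{EE}_2$: one must allow cyclotomic semidirect products with $\Z_2$ in addition to free pro-$2$ products. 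Your Kurosh argument for $G_{\mathbb M}(2)$ therefore breaks down; the paper uses instead the blanket fact (Remark~\ref{rem:subgroups et}) that every finitely generated closed subgroup of an elementary-type oriented pro-$2$ group is again of elementary type.

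Your odd-degree worry is also misplaced, and the paper's proof is correspondingly much shorter: it simply observes that $G_{\mathbb L}(2)$ is an open subgroup of $G_{\K}(2)$---of index the $2$-part of $[\mathbb L:\K]$---and applies Theorem~\ref{thm:ET intro}. The identification behind this, namely $\mathbb L(2)=\mathbb L\cdot\K(2)$, is elementary for $p=2$: since $\K(2)$ has no quadratic extension, every finite Galois extension of $\K(2)$ has odd degree (an even-order Galois group would possess an index-$2$ subgroup), hence by passing to Galois closures every finite extension of $\K(2)$ has odd degree; thus $\mathbb L\cdot\K(2)$ is again $2$-closed and equals $\mathbb L(2)$. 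In particular your surjection $G_{\mathbb L}(2)\to G_{\mathbb M}(2)$ is already an isomorphism, and no separate ``closure under odd-degree base change'' statement for the elementary-type class is needed.
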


Recall that a field is said to be {\sl Pythagorean} if any sum of two squares is a square.
Equivalently, a field $\K$ is Pythagorean if, and only if, its Witt ring $W(\K)$ is torsion free as an additive abelian group. 
Pythagorean fields play an important role in the study of quadratic forms (see, e.g., \cite{jacob1,jacob2,elmanlam}).

The proof of Theorem~\ref{thm:main} relies on the description of the maximal pro-2 Galois group of a Pythagorean fields with finitely many square classes provided by J.~Mina\v{c} in \cite{minac:pyt}: namely, the maximal pro-2 Galois group of such a field may be constructed starting from cyclic groups of order 2 and iterating a finite number of times free pro-$2$ products and semidirect products with free abelian pro-2 groups with action given by inversion (see also \cite[\S~6.2]{MPQT}).

Such pro-2 groups constitute a subfamily of the family of pro-2 groups of {\sl elementary type}.
For an arbitrary prime $p$, the family of pro-$p$ groups of elementary type (introduced by I.~Efrat, see \cite{efrat:small}) is the family of finitely generated pro-$p$ groups which may be constructed starting from free pro-$p$ groups and {\sl Demushkin} pro-$p$ groups (which include the cyclic group of order 2 if $p=2$) and iterating free pro-$p$ products and certain semidirect products with $\Z_p$ (see Definition~\ref{defin:ET} below for the detailed definition).
In fact, we prove the following more general result.

\begin{thm}\label{thm:ET intro}
Let $G$ be a pro-$2$ group of elementary type.
Then every open subgroup $H$ of $G$ has the strong $n$-Massey vanishing property with respect to $\F_2$ for every $n>2$.
\end{thm}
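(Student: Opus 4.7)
The plan is a structural induction on the elementary-type construction of $G$. I would consider the class $\mathcal{C}$ of pro-$2$ groups all of whose open subgroups satisfy the strong $n$-Massey vanishing property with respect to $\F_2$ for every $n>2$, and prove that $\mathcal{C}$ contains every pro-$2$ group of elementary type by checking: (i) free pro-$2$ groups and Demushkin pro-$2$ groups (including $\Z/2\Z$) lie in $\mathcal{C}$; (ii) $\mathcal{C}$ is closed under free pro-$2$ products; (iii) $\mathcal{C}$ is closed under the prescribed semidirect products with $\Z_2$.

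For (i), the free case is immediate: by the pro-$p$ Nielsen--Schreier theorem, open subgroups of a free pro-$2$ group are again free pro-$2$, so $\rmH^2(-,\F_2)=0$ and strong $n$-Massey vanishing is automatic. For Demushkin pro-$2$ groups, open subgroups are again Demushkin of finite index; strong Massey vanishing for such groups can be obtained from a direct cohomological analysis via their one-relator presentation or by invoking the results on maximal pro-$2$ Galois groups of local fields \cite{mt:Massey}. The degenerate base case $\Z/2\Z$ is trivial since any nonzero class $\alpha\in\rmH^1(\Z/2,\F_2)$ satisfies $\alpha\smallsmile\alpha\neq0$, so condition \eqref{eq:cup 0} forces $\alpha_1=\cdots=\alpha_n=0$ and the Massey product vanishes.

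For (ii), the Kurosh subgroup theorem for free pro-$2$ products decomposes any open subgroup $H\leq_o A\ast B$ as a free pro-$2$ product of a free pro-$2$ group together with finitely many open subgroups of conjugates of $A$ and $B$. This reduces matters to showing that the strong $n$-Massey vanishing property is itself preserved under the formation of a free pro-$2$ product; the latter follows from the cohomological splitting $\rmH^i(A\ast B,\F_2)\isom\rmH^i(A,\F_2)\oplus\rmH^i(B,\F_2)$ for $i\geq1$. Given $\alpha_1,\ldots,\alpha_n$ satisfying \eqref{eq:cup 0}, one splits each $\alpha_i=\alpha_i^A+\alpha_i^B$, produces unipotent liftings $A\to U_{n+1}(\F_2)$ and $B\to U_{n+1}(\F_2)$ from the inductive hypothesis, and amalgamates them via the universal property of the free pro-$2$ product.

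The main obstacle is step (iii). Let $G=N\rtimes\Z_2$ with $N\in\mathcal{C}$ and $\Z_2$ acting as prescribed in the elementary-type construction. An open subgroup $H\leq_o G$ either contains $N$ (in which case $H\isom N\rtimes 2^k\Z_2$) or meets $N$ in a proper open subgroup $N_0$ with $H/N_0$ finite cyclic of $2$-power order. In either case, given a sequence $\alpha_1,\ldots,\alpha_n\in\rmH^1(H,\F_2)$ satisfying \eqref{eq:cup 0}, the strategy is to restrict to the normal pro-$2$ subgroup, apply the inductive hypothesis to obtain a unipotent representation realizing the restricted Massey product as $0$, and then to lift this representation to $H$ through the Lyndon--Hochschild--Serre spectral sequence of the associated extension. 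The delicate point is that for a general group extension the obstruction to such a lifting need not vanish; here it does, because the $\Z_2$-action permitted in the elementary-type construction is of a very rigid, cyclotomic/inversion type. One must exploit this rigidity either to produce an equivariant cocycle directly, or to show that after modifying the lifting on the normal subgroup by a suitable coboundary the obstruction class in $\rmH^2$ is killed. This is the heart of the proof and is where the specific form of the elementary-type semidirect product, rather than an arbitrary extension by $\Z_2$, becomes essential.
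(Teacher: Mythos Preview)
Your overall induction scheme is heavier than necessary, and step~(iii) is not actually carried out.

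First, the paper avoids your class~$\mathcal{C}$ entirely. The key fact you are missing is Remark~\ref{rem:subgroups et}: if $(G,\theta)$ is an oriented pro-$2$ group of elementary type and $H\leq G$ is finitely generated (in particular open), then $(H,\theta|_H)$ is again of elementary type. Hence it suffices to prove that every pro-$2$ group of elementary type itself has the strong $n$-Massey vanishing property, and the statement for open subgroups follows for free. This eliminates the need for Kurosh in~(ii) and for any analysis of open subgroups of semidirect products in~(iii); one simply checks that the property passes through free products and through $\Z_2\rtimes_\theta(\,\cdot\,)$ once.

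Second, your step~(iii) is only a sketch, and the mechanism you propose (restrict to the normal subgroup, lift via Lyndon--Hochschild--Serre, hope the obstruction dies) is not how the paper proceeds and is not obviously viable. The paper's argument (Proposition~\ref{prop:semidirect prod}) is a direct, explicit construction. Lemma~\ref{lem:semidirect alpha} shows that a sequence $\alpha_1,\ldots,\alpha_n$ satisfying \eqref{eq:cup 0} either lies entirely in $\rmH^1(G_0)$ (trivial case), or alternates between $\alpha_1$ and $\alpha_1+\chi_0$ with $\alpha_i(z)=1$ for all~$i$. In the second case one writes down, generator by generator, the images in $\dbU_{n+1}$ using matrices $A,B$ extracted from the infinite dihedral group (Lemma~\ref{lemma:B2 I}) together with the correction matrices $C(\mu)$ of Lemma~\ref{lemma:BAC}, and verifies by hand that all defining relations of $G$ are respected. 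There is no spectral-sequence obstruction computation; the rigidity of the orientation is used concretely through the identities $[B,A]=A^{-2}$ and $[C(\mu),A]=A^{\mu}$.

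Two smaller corrections. Your treatment of $\Z/2$ is wrong: condition~\eqref{eq:cup 0} only forbids two \emph{consecutive} $\alpha_i$ from both being nonzero, so sequences like $\chi,0,\chi,0,\ldots$ are allowed; the paper handles this case explicitly in \S\ref{ssec:Demushkin}. And for Demushkin groups, invoking \cite{mt:Massey} via local fields is insufficient, since not every Demushkin pro-$2$ group is known to arise as $G_{\K}(2)$ (cf.\ Remark~\ref{rem:Demushkin}); the paper instead cites \cite[Thm.~3.5]{pal:Massey}, which treats all Demushkin groups directly.
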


By the work of J.~Mina\v{c} on the maximal pro-2 Galois group of Pythagorean fields, Theorem~\ref{thm:ET intro} implies Theorem~\ref{thm:main}.
The proof of Theorem~\ref{thm:ET intro} uses a result --- whose original formulation, for discrete groups, is due to W.~Dwyer, see \cite{dwyer} --- which interprets the vanishing of Massey products in the $\F_p$-cohomology of a pro-$p$ group $G$ in terms of the existence of certain upper unitriangular representations of $G$.

Theorem~\ref{thm:ET intro} was proved by the author for every prime $p$ in \cite{cq:massey}, but with restrictions for the case $p=2$ --- for example, excluding cyclic groups of order 2 among the ``building bricks'' of pro-2 groups of elementary type, and thus excluding maximal pro-2 groups of Pythagorean fields.
Therefore, Theorem~\ref{thm:ET intro} --- where such restrictions are dropped --- is complementary to \cite[Thm.~1.2]{cq:massey}.

In the '90s, I.~Efrat formulated the {\sl Elementary Type Conjecture} on maximal pro-$p$ Galois groups,
which predicts that if $\K$ is a field containing a root of 1 of order $p$ and $G_{\K}(p)$ is finitely generated, then $G_{\K}(p)$ is of elementary type, see \cite{ido:etc}.
This is known to be true, for example, if $\K$ is an extension of relative transcendence degree 1 of a pseudo algebraically closed field (see \cite[Ch.~11]{friedjarden} and \cite[\S~5]{ido:Hasse}); or if $\K$ is an algebraic extension of a global field of characteristic not $p$  (see \cite{ido:etc2}); see also Proposition~\ref{prop:etc fields} below.
Hence, Theorem~\ref{thm:ET intro} implies the following.

\begin{cor}\label{cor:etc}
 Let $\K$ be one of the following:
  \begin{itemize}
  \item[(a)] a local field, or an extension of transcendence degree 1 of a local field;
\item[(b)] a pseudo algebraically closed (PAC) field, or an extension of relative transcendence degree 1 of a PAC field;
\item[(c)] a $2$-rigid field {\rm (}for the definition of $p$-rigid fields see \cite[p.~722]{ware}{\rm )};
\item[(d)] an algebraic extension of a global field of characteristic not $p$;
\item[(e)] a valued $2$-Henselian field with residue field $\kappa$, and $G_{\kappa}(2)$ satisfies the strong $n$-Massey vanishing property for every $n>2$.
\end{itemize}
 Suppose further that $\K$ has a finite number of square classes.
 Then $G_{\mathbb{L}}(2)$ has the strong $n$-Massey vanishing property for every $n>2$, for any finite extension $\mathbb{L}/\K$.
 \end{cor}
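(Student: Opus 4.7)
The plan is to deduce Corollary~\ref{cor:etc} from Theorem~\ref{thm:ET intro}. The strategy for each of (a)--(e) is to exhibit $G_{\mathbb{L}}(2)$ as an open subgroup of a pro-$2$ group of elementary type; the hypothesis that $\K$ has finitely many square classes guarantees that $G_{\K}(2)$ is finitely generated as a pro-$2$ group, which is required by the definition of elementary type, and it propagates to $\mathbb{L}$: by the standard estimate $[\mathbb{L}^\times:(\mathbb{L}^\times)^2] \leq [\mathbb{L}:\K]\cdot[\K^\times:(\K^\times)^2]$ one gets $[\mathbb{L}^\times:(\mathbb{L}^\times)^2]<\infty$ whenever $[\K^\times:(\K^\times)^2]<\infty$ and $[\mathbb{L}:\K]<\infty$, so $G_{\mathbb{L}}(2)$ is finitely generated as well.

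For cases (a), (b) and (d) the argument is uniform: the classes of local fields, PAC fields, algebraic extensions of global fields of characteristic $\neq 2$, together with their relative-transcendence-degree-$1$ analogues, are all stable under finite extensions. Hence $\mathbb{L}$ lies in the same class as $\K$ and Proposition~\ref{prop:etc fields} applies directly to $\mathbb{L}$, yielding that $G_{\mathbb{L}}(2)$ is itself of elementary type; Theorem~\ref{thm:ET intro} applied with $H=G_{\mathbb{L}}(2)$ then gives the strong $n$-Massey vanishing property.

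Cases (c) and (e) are more delicate because $\mathbb{L}$ need not remain in the same class as $\K$. For (c), I would use the explicit structural description of the maximal pro-$2$ Galois group of a $2$-rigid field with finitely many square classes (an iterated semidirect product of copies of $\Z_2$ with the inversion action, possibly starting from a $\Z/2\Z$-factor) to argue that the open subgroup of $G_{\K}(2)$ corresponding to the maximal pro-$2$ subextension $\mathbb{L}_0$ of $\mathbb{L}/\K$ embeds into a pro-$2$ group of elementary type, and then bridge from $G_{\mathbb{L}_0}(2)$ to $G_{\mathbb{L}}(2)$ using that $[\mathbb{L}:\mathbb{L}_0]$ is odd. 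For (e), $2$-Henselianity is preserved by $\mathbb{L}$ with residue field $\kappa'$ a finite extension of $\kappa$, and I would argue by induction on $[\mathbb{L}:\K]$: the classical decomposition of $G_{\mathbb{L}}(2)$ for a $2$-Henselian valued field as a semidirect product involving the inertia quotient and $G_{\kappa'}(2)$ reduces the problem to the same statement for $\kappa'$, after which Proposition~\ref{prop:etc fields} and the induction hypothesis yield that $G_{\mathbb{L}}(2)$ is of elementary type.

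The main obstacle I foresee is case (e): the hypothesis on the residue field is of the same form as the property we wish to prove, so the induction must be set up carefully, making sure that both the ``finite square classes'' condition \emph{and} the strong $n$-Massey vanishing property persist through finite extensions of $\kappa$. Case (c) is also subtle, since one cannot reapply Proposition~\ref{prop:etc fields} to $\mathbb{L}$ and must argue directly with the structure of $G_{\K}(2)$ and its open subgroups; once $G_{\mathbb{L}}(2)$ is shown to be (an open subgroup of) a pro-$2$ group of elementary type in every case, Theorem~\ref{thm:ET intro} concludes the proof.
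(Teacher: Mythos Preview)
Your overall plan is correct in spirit but considerably more complicated than necessary. The paper's proof is one sentence: Proposition~\ref{prop:etc fields}, applied to $\K$ rather than to $\mathbb{L}$, shows that $(G_{\K}(2),\theta_{\K})$ is of elementary type in each of the listed cases; then Theorem~\ref{thm:ET intro}, combined with the fact (already used in the proof of Theorem~\ref{thm:main}) that $G_{\mathbb{L}}(2)$ is an open subgroup of $G_{\K}(2)$ for any finite extension $\mathbb{L}/\K$, yields the conclusion directly. The point you are missing is that the open-subgroup clause of Theorem~\ref{thm:ET intro} already absorbs the passage from $\K$ to $\mathbb{L}$: one never needs to check that $\mathbb{L}$ remains in the same class as $\K$, nor to analyse $G_{\mathbb{L}}(2)$ on its own.

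Consequently, the case distinction you set up for (c) and (e) is superfluous. Your treatment of (a), (b), (d) is correct but takes an unnecessary detour. The induction you sketch for (e), moreover, has a genuine gap: the induction step would require the strong $n$-Massey vanishing property for the residue field $\kappa'$ of $\mathbb{L}$, and this does not follow from the hypothesis on $\kappa$ alone. In the paper's argument no such induction is needed, since Proposition~\ref{prop:etc fields}(vi) is applied once, to $\K$, and the passage to $\mathbb{L}$ is handled uniformly by Theorem~\ref{thm:ET intro}.
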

 
Analogously to Theorem~\ref{thm:ET intro}, Corollary~\ref{cor:etc} has been proved by the author for every prime $p$, but with the further assumption that $\sqrt{-1}\in\K$ if $p=2$ (see \cite[Cor.~1.3]{cq:massey}), which is dropped here.
So, Corollary~\ref{cor:etc} is complementary to \cite[Cor.~1.3]{cq:massey}.

 
{\small \subsection*{Acknowledgments}
The author wishes to thank I.~Efrat, E.~Matzri, J.~Mina\v{c}, F.W.~Pasini, and N.D.~T\^an, for several inspiring discussions on Massey products in Galois cohomology and pro-$p$ groups of elementary type, which occurred in the past years; and the anonymous referee for the helpful comments.
}

 
 \section{Preliminaries}\label{sec:prel}
 Given an arbitrary group $\Gamma$, and two elements $g,h\in \Gamma$, we use the following notation:
 \[
  {}^gh=ghg^{-1}\qquad\text{and}\qquad [g,h]={}^gh\cdot h^{-1}=ghg^{-1}h^{-1}.
 \]
 Henceforth, given a pro-2 group $G$ subgroups will be implicitly assumed to be closed with respect to the pro-2 topology, and generators are to be intended in the topological sense.

\subsection{$\F_p$-cohomology of pro-$p$ groups}
 
  Given a pro-$p$ group $G$, consider $\F_p$ as a trivial $G$-module.
 Here we recall some basic facts on $\F_p$-cohomology of pro-$p$ groups, which will be used throughout the paper.
 As references for $\F_p$-cohomology of pro-$p$ groups, we direct the reader to \cite[Ch.~I, \S~4]{serre:galc} and to \cite[Ch.~I and Ch.~III, \S~3]{nsw:cohn}.
 
 Henceforth, $\rmH^s(G)$ will denote the $s$-th $\F_p$-cohomology group of $G$.
 For the first $\F_p$-cohomology group one has
 \begin{equation}\label{eq:H1}
  \rmH^1(G)=\mathrm{Hom}(G,\F_p)=\left(G/G^p[G,G]\right)^\ast,
 \end{equation}
where the middle term is the $\F_p$-vector space of the homomorphisms of pro-$p$ groups $G\to\F_p$, while in the right-side term $G^p[G,G]$ denotes the Frattini subgroup of $G$, namely, the subgroup of $G$ generated by $\{x^p[y,z]\mid x,y,z\in G\}$, and $\textvisiblespace^\ast$ denotes the $\F_p$-dual space (cf. \cite[Ch.~I, \S~4.2]{serre:galc}). 

In particular, if $G$ is finitely generated, and $\calX=\{x_1,\ldots,x_d\}$ is a minimal generating set of $G$, then 
\[
 \mathcal{B}=\left\{\:\chi_1,\ldots,\chi_d\in \rmH^1(G)\:\mid\:\chi_h(x_k)=\delta_{hk}\text{ for }h,k=1,\ldots,d\:\right\}
\]
is a basis of $\rmH^1(G)$, and it is said to be the basis dual to $\calX$.

The cup-product is a bilinear map
\[
 \xymatrix{\rmH^s(G)\times\rmH^t(G)\ar[r]^-{\smallsmile} &\rmH^{s+t}(G),\qquad s,t\geq0, }
\]
and it is skew-symmetric, i.e., $\beta\smallsmile\alpha=(-1)^{st}\alpha\smallsmile\beta$ for all $\alpha\in\rmH^s(G)$ and $\beta\in\rmH^t(G)$ (cf. \cite[Prop.~1.4.4]{nsw:cohn}).
Thus, in case $p=2$ the cup-product is symmetric, and for $\alpha\in\rmH^1(G,\F_2)$ we write $\alpha^2=\alpha\smallsmile\alpha$.
One has the following (cf. \cite[Lemma~2.4]{idojan:descending}).

\begin{lem}\label{lem:bock}
Let $G$ be a pro-$2$ group. 
Given $\alpha\in\rmH^1(G)$, one has $\alpha^2=0$ if, and only if, there exists a homomorphism of pro-2 groups $\tilde\alpha\colon G\to\Z/4$ such that the diagram
\[
 \xymatrix{ & G\ar[d]^-{\alpha}\ar[dl]_-{\tilde\alpha} \\ \Z/4\ar@{->>}[r]^-{\pi} & \F_2 }
\]
commutes (here $\pi\colon \Z/4\twoheadrightarrow \F_2$ denotes the canonical projection).
\end{lem}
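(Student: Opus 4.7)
The plan is to interpret the statement through the Bockstein homomorphism attached to the short exact sequence of trivial $G$-modules
\[
0 \longrightarrow \F_2 \xrightarrow{\;\cdot 2\;} \Z/4 \xrightarrow{\;\pi\;} \F_2 \longrightarrow 0.
\]
First, since $G$ is pro-$2$ and the coefficients are trivial, both $\rmH^1(G,\F_2)$ and $\rmH^1(G,\Z/4)$ coincide with the corresponding groups of continuous homomorphisms from $G$. Consequently, the existence of a pro-$2$ homomorphism $\tilde\alpha\colon G\to\Z/4$ with $\pi\circ\tilde\alpha=\alpha$ is exactly equivalent to $\alpha$ lying in the image of the induced map $\pi_\ast\colon \rmH^1(G,\Z/4)\to\rmH^1(G,\F_2)$; this reduces the lemma to a purely cohomological statement.

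Next, I would invoke the associated long exact sequence in continuous cohomology,
\[
\rmH^1(G,\Z/4) \xrightarrow{\;\pi_\ast\;} \rmH^1(G,\F_2) \xrightarrow{\;\beta\;} \rmH^2(G,\F_2),
\]
where $\beta$ is the connecting (Bockstein) homomorphism. By exactness, $\alpha\in\Img(\pi_\ast)$ if and only if $\beta(\alpha)=0$. Thus the lemma is reduced to the identity $\beta(\alpha)=\alpha\smallsmile\alpha$.

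The heart of the proof is this last identity, which I would verify at the cocycle level. Choose the set-theoretic lift $\hat\alpha\colon G\to\Z/4$ sending each $g$ to $0$ or $1$ according to the value of $\alpha(g)\in\F_2\subset\Z/4$. The standard recipe for the connecting map gives the $2$-cocycle
\[
c(g,h) \;=\; \tfrac{1}{2}\bigl(\hat\alpha(g)+\hat\alpha(h)-\hat\alpha(gh)\bigr) \pmod 2
\]
representing $\beta(\alpha)$; inspecting the four cases for $(\alpha(g),\alpha(h))\in\F_2\times\F_2$ — using that $\alpha$ is additive modulo $2$ — shows in each case that $c(g,h)=\alpha(g)\alpha(h)$, which is precisely the cocycle representing $\alpha\smallsmile\alpha$. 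Combining this with the exactness step yields $\alpha^2=0\iff\alpha\in\Img(\pi_\ast)$, as required.

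The only non-formal ingredient — and the step I would expect to be the genuine obstacle if one wished to present a fully self-contained argument — is the identification of the Bockstein with the cup square in characteristic $2$; everything else is formal homological algebra, and in fact the paper simply cites \cite{idojan:descending} for the result.
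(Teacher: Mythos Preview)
Your argument is correct and is the standard one: the short exact sequence $0\to\F_2\to\Z/4\to\F_2\to0$ yields the Bockstein $\beta\colon\rmH^1(G,\F_2)\to\rmH^2(G,\F_2)$, exactness identifies the liftability of $\alpha$ with $\beta(\alpha)=0$, and the explicit cocycle check gives $\beta(\alpha)=\alpha\smallsmile\alpha$. The paper does not supply its own proof of this lemma at all --- it simply records the statement with a reference to \cite[Lemma~2.4]{idojan:descending} --- so there is nothing to compare against; your Bockstein argument is precisely the expected proof and, as you note yourself, is essentially what the cited source does.
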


\begin{rem}\label{rem:kummer}\rm
 Let $\K$ be a field containing a root of 1 of order $p$, and let $\K^\times$ denote its multiplicative group.
 By Kummer theory, one has an isomorphism of $\F_p$-vector spaces
 \[\K^\times/(\K^\times)^p\simeq \rmH^1(G_{\K}(p))\]
(cf., e.g., \cite[Ch.~I, \S~1.2, Rem.~2]{serre:galc}).
In particular, if $p=2$ then $G_{\K}(2)$ is finitely generated if, and only if, $\K$ has a finite number of square classes.
 \end{rem}

Now assume that $G$ has a minimal presentation
\begin{equation}\label{eq:minpres}
G=\langle \:x_1,\ldots,x_d\:\mid\:r_1=\ldots=r_m=1\:\rangle
\end{equation}
(for the definition of a presentation, and of a minimal presentation, of a pro-$p$ group see, e.g., \cite[\S~4.6 and p.~311]{ddsms}).
Then the number of defining relations $m$ is equal to the dimension of $\rmH^2(G)$ (cf. \cite[Ch.~I,\S~4.3]{serre:galc}).

Moreover, for $p=2$ one has the following result (which is a special case of \cite[Prop.~3.9.13]{nsw:cohn}), which connects the behavior of the cup-product and the shape of the defining relations.

\begin{prop}\label{prop:rel cup}
Let $G$ be a pro-2 group with minimal presentation \eqref{eq:minpres},
yielding the dual basis $\{\chi_1,\ldots,\chi_d\}$ of $\rmH^1(G)$, and suppose that for each defining relation $r_l$, $l=1,\ldots,m$, one has 
\[
r_l\equiv \prod_{1\leq h<h'}[x_h,x_{h'}]^{a(l)_{h,h'}}\cdot\prod_{1\leq k\leq d}x_{k}^{2b(l)_{k}}\mod G^4\left[G^2,G^2\right],
\]
for some $a(l)_{h,h'},b(l)_k\in\F_2$ --- here $G^4[G^2,G^2]$ denotes the subgroup generated by $\{x^4[y,z]\:\mid\:x\in G,\:y,z\in G^2\}$.
Then for each $l=1,\ldots,m$ there is a linear form $\mathrm{tr}_l\colon \rmH^2(G,\F_2)\to\F_2$ such that
\[
 \mathrm{tr}_l(\chi_h\smallsmile\chi_{h'})=\begin{cases}
                                            a(l)_{h,h'} & \text{if }h<h'\\ b(l)_h & \text{if }h=h',
                                           \end{cases}\]
for all $1\leq h\leq h'\leq d$.
\end{prop}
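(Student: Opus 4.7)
The plan is to derive Proposition~\ref{prop:rel cup} from the standard dictionary between the $\F_2$-cohomology of a finitely presented pro-$2$ group and the shape of its defining relators, by combining the transgression isomorphism from the five-term exact sequence of a minimal free pro-$2$ presentation with an explicit cocycle description of the cup product.

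First, I would set up the relation module. Let $F$ be the free pro-$2$ group on $\{x_1,\ldots,x_d\}$, let $\pi\colon F\twoheadrightarrow G$ be the presentation induced by \eqref{eq:minpres}, and set $R=\Ker(\pi)$. Minimality of the presentation forces $R\subseteq F^2[F,F]$ and makes $\bar r_1,\ldots,\bar r_m$ an $\F_2$-basis of $R/R^2[R,F]$. Since $F$ is free pro-$2$, $\rmH^2(F,\F_2)=0$; the five-term exact sequence in $\F_2$-cohomology, together with the fact that inflation $\rmH^1(G)\to \rmH^1(F)$ is an isomorphism by minimality, yields a transgression isomorphism
\[
\trg\colon \rmH^1(R,\F_2)^G = \Hom(R/R^2[R,F],\F_2) \xrightarrow{\sim} \rmH^2(G,\F_2).
\]
Then I define the linear form $\mathrm{tr}_l\in \rmH^2(G,\F_2)^*$ as the pull-back along $\trg\inv$ of the basis vector dual to $\bar r_l$; concretely, $\mathrm{tr}_l(\alpha)$ reads off the coefficient of $\bar r_l$ in the expansion of $\trg\inv(\alpha)$ in this basis.

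Next I would identify the cup product under transgression: the goal is to show that $\trg\inv(\chi_h\smallsmile\chi_{h'})$ is the linear form on $R/R^2[R,F]$ extracting the $[x_h,x_{h'}]$-coefficient when $h<h'$ and the $x_h^2$-coefficient when $h=h'$, in any decomposition modulo $F^4[F^2,F^2]$. This is a cochain-level calculation: pick a set-theoretic section $s\colon G\to F$, use it to write down the standard inhomogeneous $2$-cocycle representing $\chi_h\smallsmile\chi_{h'}$, and unwind the definition of transgression against $r_l$ modulo $R^2[R,F]\cdot F^4[F^2,F^2]$. Given the hypothesis on the shape of each $r_l$, this identification yields $\mathrm{tr}_l(\chi_h\smallsmile\chi_{h'})=a(l)_{h,h'}$ when $h<h'$ and $\mathrm{tr}_l(\chi_h^2)=b(l)_h$, as required.

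The main obstacle is the diagonal ($h=h'$) case: for $p=2$, squares $\chi_h^2$ are nonzero in general, and are controlled by the Bockstein rather than being killed by skew-symmetry as for odd $p$. The cleanest route is to separate the commutator and square contributions and reduce them to universal examples, namely the pro-$2$ Heisenberg group for the commutator part and, via Lemma~\ref{lem:bock}, $\Z/4$ for the square part; naturality then pins down the universal formula, recovering the computation carried out in the generality of \cite[Prop.~3.9.13]{nsw:cohn}.
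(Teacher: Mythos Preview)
The paper does not actually prove Proposition~\ref{prop:rel cup}; it is stated without proof as a special case of \cite[Prop.~3.9.13]{nsw:cohn}. Your proposal is a correct outline of the standard argument behind that reference: the transgression isomorphism $\rmH^1(R,\F_2)^G\xrightarrow{\sim}\rmH^2(G,\F_2)$ coming from the five-term sequence of a minimal free presentation $1\to R\to F\to G\to 1$, followed by the explicit identification of $\trg^{-1}(\chi_h\smallsmile\chi_{h'})$ with the linear form extracting the relevant commutator or square coefficient from the image of each $r_l$ in $R/R^2[R,F]$. Your separation of the off-diagonal case (commutators, handled via the Heisenberg model) from the diagonal case (squares, handled via the Bockstein and Lemma~\ref{lem:bock}) is exactly the $p=2$ refinement that distinguishes this situation from odd primes, and is how the computation is organised in \cite{nsw:cohn}. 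So there is nothing to compare: you have supplied the proof the paper only cites, and your route is the standard one.

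One small point of hygiene: the congruence in the statement is written ``$\bmod\ G^4[G^2,G^2]$'', but the $r_l$ live in $F$, not in $G$; your proposal implicitly (and correctly) works modulo $F^4[F^2,F^2]$ and uses the fact that $R\subseteq F^2[F,F]$ to make sense of the coefficients $a(l)_{h,h'}$ and $b(l)_k$ as invariants of the class of $r_l$ in $R/R^2[R,F]$. You might make this explicit when writing the argument out in full.
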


 
 \subsection{Massey products and upper unitriangular matrices}\label{ssec:massey}
For a survey on Massey products in Galois cohomology we direct the reader to \cite{vogel}, \cite{mt:Massey}, or \cite{cq:massey}. 
Throughout the  paper, we will need only the ``translation'' of Massey products in terms of unipotent representations, recalled here below.

For $n\geq 2$ let
\[
 \dbU_{n+1}=\left\{\left(\begin{array}{ccccc} 1 & a_{1,2} & \cdots & & a_{1,n+1} \\ & 1 & a_{2,3} &  \cdots & \\
 &&\ddots &\ddots& \vdots \\ &&&1& a_{n,n+1} \\ &&&&1 \end{array}\right)\mid a_{i,j}\in\F_p \right\}\subseteq 
 \mathrm{GL}_{n+1}(\F_p)
\]
be the group of unipotent upper-triangular $(n+1)\times(n+1)$-matrices over $\F_p$.
Let $I_{n+1}$ denote the $(n+1)\times(n+1)$ identity matrix, and for $1\leq i<j\leq n+1$, let $E_{ij}$ denote the $(n+1)\times(n+1)$-matrix with 1 at the entry $(i,j)$, and 0 elsewhere.

Now let $G$ be a pro-$p$ group.
For a homomorphism of pro-$p$ groups $\rho\colon G\to\dbU_{n+1}$, and for $1\leq i\leq n$, let $\rho_{i,i+1}$ denote the projection of $\rho$ on the $(i,i+1)$-entry.
Observe that $\rho_{i,i+1}\colon G\to\F_p$ is a homomorphism of pro-$p$ groups, and thus we may consider $\rho_{i,i+1}$ as an element of $\rmH^1(G,\F_p)$.
One has the following ``pro-$p$ translation'' of Dwyer's result on Massey products (cf., e.g., \cite[\S~8]{ido:Massey}).

\begin{prop}\label{prop:masse unip}
Let $G$ be a pro-$p$ group and let $\alpha_1,\ldots,\alpha_n$ be a sequence of elements of $\rmH^1(G,\F_p)$, with $n\geq2$.
The $n$-fold Massey product $\langle\alpha_1,\ldots,\alpha_n\rangle$ vanishes if, and only if, there exists a continuous
homomorphism $ \rho\colon G\to\dbU_{n+1}$ such that $\rho_{i,i+1}=\alpha_i$ for every $i=1,\ldots,n$.
\end{prop}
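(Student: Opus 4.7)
The plan is to translate the homomorphism condition for $\rho\colon G\to\dbU_{n+1}$ entry-by-entry and match it with the definition of the $n$-fold Massey product via defining systems.

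First, I would recall the defining-system description of $\langle\alpha_1,\ldots,\alpha_n\rangle$: fix continuous cocycles $a_i$ representing $\alpha_i$; a \emph{defining system} is a family of continuous $1$-cochains $\{a_{i,j}\colon G\to\F_p\}$ indexed by $1\le i\le j\le n$ with $(i,j)\ne(1,n)$, satisfying $a_{i,i}=a_i$ and
\[
da_{i,j}=-\sum_{k=i}^{j-1} a_{i,k}\smallsmile a_{k+1,j}\quad\text{for }i<j.
\]
Then $c:=\sum_{k=1}^{n-1}a_{1,k}\smallsmile a_{k+1,n}$ is a $2$-cocycle whose class lies in $\langle\alpha_1,\ldots,\alpha_n\rangle$, and vanishing means $c=-da_{1,n}$ for some continuous cochain $a_{1,n}$, i.e., the defining system can be extended to include the pair $(1,n)$.

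Next, any matrix $\rho\in\dbU_{n+1}$ is determined by its strictly-upper entries $\rho_{i,j+1}$, $1\le i\le j\le n$, and computing the product $\rho(g)\rho(h)$ entry-by-entry yields
\[
\rho_{i,j+1}(gh)=\rho_{i,j+1}(g)+\rho_{i,j+1}(h)+\sum_{k=i+1}^{j}\rho_{i,k}(g)\rho_{k,j+1}(h).
\]
After the reindexing $a_{i,j}:=\rho_{i,j+1}$, this identity is exactly $da_{i,j}=-\sum_{k=i}^{j-1} a_{i,k}\smallsmile a_{k+1,j}$, i.e., the defining-system relations. The condition $\rho_{i,i+1}=\alpha_i$ becomes $a_{i,i}=\alpha_i$, and the relation for the top-right entry $a_{1,n}=\rho_{1,n+1}$ is precisely $c=-da_{1,n}$. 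Hence the data of a continuous homomorphism $\rho\colon G\to\dbU_{n+1}$ with $\rho_{i,i+1}=\alpha_i$ is equivalent to an extended defining system witnessing the vanishing of $\langle\alpha_1,\ldots,\alpha_n\rangle$, and the proposition follows in both directions.

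The only care needed is (i) keeping the sign conventions in the definitions of $d$ and $\smallsmile$ compatible with those used for Massey products --- this is a formality, since for $1$-cochains all sign twists are trivial --- and (ii) continuity of all cochains, which is automatic in the pro-$p$ setting because $\rmH^\ast(G,\F_p)$ is computed from continuous cochains (cf.\ \cite[Ch.~I, \S~2]{serre:galc}), and continuity of $\rho$ is inherited coordinate-wise from continuity of the cochains. I do not anticipate any substantial obstacle: the result is the standard pro-$p$ translation of Dwyer's classical theorem for discrete groups, and the proof amounts to the bookkeeping above.
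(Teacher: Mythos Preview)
Your sketch is correct and is precisely the standard argument behind Dwyer's correspondence: unpacking the multiplicativity of $\rho$ entry-by-entry yields exactly the cochain identities defining a (complete) defining system for $\langle\alpha_1,\ldots,\alpha_n\rangle$, and conversely. The reindexing $a_{i,j}=\rho_{i,j+1}$ and the computation of $(\rho(g)\rho(h))_{i,j+1}$ are right, and your remarks on signs and continuity are adequate.

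There is nothing to compare against, however: the paper does not give a proof of this proposition. It is stated as a known fact (the ``pro-$p$ translation'' of Dwyer's theorem) with a reference to \cite[\S~8]{ido:Massey}, and is then used as a black box throughout \S\ref{sec:main}. So your write-up supplies a proof where the paper only supplies a citation; what you have is exactly the argument one finds in the cited sources.
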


Employing Proposition~\ref{prop:masse unip}, and the universal properties of free pro-$p$ groups and free pro-$p$ products of pro-$p$ groups respectively, it is fairly easy to prove the following (cf., e.g., \cite[Ex.~4.1]{mt:Massey} and \cite[Th.~5.1]{BCQ:AbsGalType}; for the definition of free pro-$p$ product of pro-$p$ groups see, e.g., \cite[\S~9.1]{ribzal}).

\begin{prop}\label{prop:free}
 \begin{itemize}
  \item[(i)] A free pro-$p$ group has the strong $n$-Massey vanishing property for every $n>2$.
  \item[(ii)] Let $G_1,G_2$ be two pro-$p$ groups with the the strong $n$-Massey vanishing property for every $n>2$.
  Then also the free pro-$p$ product $G_1\ast G_2$ has the strong $n$-Massey vanishing property for every $n>2$.
 \end{itemize}
\end{prop}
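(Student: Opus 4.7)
The plan is to apply Proposition~\ref{prop:masse unip} in both parts, reducing the strong $n$-Massey vanishing to the construction of a continuous homomorphism $\rho\colon G \to \dbU_{n+1}$ whose superdiagonal projections $\rho_{i,i+1}$ realize the prescribed classes $\alpha_i$.

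For part (i), I would first observe that a free pro-$p$ group $F$ has $\rmH^2(F) = 0$, so condition~\eqref{eq:cup 0} is automatic. Given any $\alpha_1, \ldots, \alpha_n \in \rmH^1(F)$, pick a minimal generating set $\{x_j\}$ of $F$ and define
\[
\rho(x_j) := I_{n+1} + \sum_{i=1}^{n} \alpha_i(x_j)\, E_{i,i+1} \;\in\; \dbU_{n+1}.
\]
The universal property of free pro-$p$ groups extends this assignment uniquely to a continuous homomorphism $\rho\colon F \to \dbU_{n+1}$; by construction $\rho_{i,i+1}$ and $\alpha_i$ coincide on the chosen generators, hence they coincide as elements of $\rmH^1(F)$, and Proposition~\ref{prop:masse unip} delivers the vanishing.

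For part (ii), the main ingredient is the cohomological behavior of the free pro-$p$ product $G = G_1 \ast G_2$: the two canonical inclusions $G_k \hookrightarrow G$ induce isomorphisms $\rmH^s(G) \cong \rmH^s(G_1) \oplus \rmH^s(G_2)$ for $s = 1,2$, and the cup-product on $\rmH^1(G)$ has no cross terms between the two summands. Consequently, if $(\alpha_1, \ldots, \alpha_n)$ satisfies~\eqref{eq:cup 0} in $\rmH^1(G)$, then each restricted sequence $(\alpha_i|_{G_k})$ satisfies~\eqref{eq:cup 0} in $\rmH^1(G_k)$. Applying the hypothesis to each $G_k$ produces homomorphisms $\rho_k\colon G_k \to \dbU_{n+1}$ with $(\rho_k)_{i,i+1} = \alpha_i|_{G_k}$. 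The universal property of the free pro-$p$ product then pastes $\rho_1$ and $\rho_2$ into a single continuous $\rho\colon G \to \dbU_{n+1}$, and the $\rmH^1$-splitting forces $\rho_{i,i+1} = \alpha_i$. A final appeal to Proposition~\ref{prop:masse unip} concludes.

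The only non-formal input is the cohomological splitting for free pro-$p$ products, together with the annihilation of the cross cup-products between the two factors; this is standard, and everything else is routine use of universal properties, so I do not anticipate a genuine obstacle.
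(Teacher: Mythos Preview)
Your proposal is correct and follows exactly the approach the paper indicates: the paper does not give a detailed proof but simply states that the result follows from Proposition~\ref{prop:masse unip} together with the universal properties of free pro-$p$ groups and of free pro-$p$ products (citing \cite[Ex.~4.1]{mt:Massey} and \cite[Th.~5.1]{BCQ:AbsGalType}). Your write-up is precisely a fleshed-out version of that sketch, including the standard splitting $\rmH^s(G_1\ast G_2)\simeq\rmH^s(G_1)\oplus\rmH^s(G_2)$ for $s=1,2$ (which the paper itself invokes later, via \cite[Thm.~4.1.4]{nsw:cohn}, in \S\ref{ssec:dihedral}).
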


\begin{rem}\rm
 If a field $\K$ has characteristic equal to $2$, then the maximal pro-2 Galois group $G_{\K}(2)$ is a free pro-2 group (cf., e.g., \cite[Thm.~6.1.4]{nsw:cohn}), and thus it has the strong $n$-Massey vanishing property for every $n>2$ by Proposition~\ref{prop:free}.
 So, henceforth we may always tacitly assume that every field we deal with has characteristic not 2.
\end{rem}

Proposition~\ref{prop:masse unip} is also useful to show that, in order to check that a pro-$p$ group satisfies the strong $n$-Massey vanishing property, it suffices to verify that every sequence of length at most $n$ of {\sl non-trivial} cohomology elements of degree 1 whose cup-products satisfy the triviality condition \eqref{eq:cup 0} yields a Massey product containing 0, as stated by the following (cf. \cite[Prop.~2.8]{cq:massey}).

 \begin{prop}\label{prop:trivial strong}
Given $N>2$, a pro-$p$ group $G$ satisfies the strong $n$-Massey vanishing property for every $3\leq n\leq N$ if, and only if,  for every $3\leq n\leq N$, every sequence $\alpha_1,\ldots,\alpha_n$ of non-trivial elements of $\rmH^1(G)$ satisfying the triviality condition \eqref{eq:cup 0} yields an $n$-fold Massey product containing $0$. 
 \end{prop}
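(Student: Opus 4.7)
The ``only if'' direction is immediate, since a sequence of non-trivial elements satisfying \eqref{eq:cup 0} is a special case of an arbitrary sequence satisfying \eqref{eq:cup 0}. For the converse, I would proceed by induction on $n$, for $3 \leq n \leq N$, to show that every sequence $\alpha_1, \ldots, \alpha_n \in \rmH^1(G)$ satisfying \eqref{eq:cup 0} has vanishing $n$-fold Massey product, regardless of whether the $\alpha_i$ are trivial. If all the $\alpha_i$ are non-trivial, the hypothesis gives the conclusion directly.

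Otherwise, pick an index $i$ with $\alpha_i = 0$ and split the sequence at position $i$ into the two sub-sequences $\beta = (\alpha_1, \ldots, \alpha_{i-1})$ and $\gamma = (\alpha_{i+1}, \ldots, \alpha_n)$, each still satisfying \eqref{eq:cup 0} for its own (strictly shorter) length. For each sub-sequence I would produce a continuous homomorphism from $G$ into the appropriate unitriangular group realizing it in the sense of Proposition~\ref{prop:masse unip}: the empty sub-sequence gives the trivial $1 \times 1$ representation; a length-$1$ sub-sequence $(\alpha)$ gives $g \mapsto I_2 + \alpha(g)E_{1,2} \in \dbU_2$; a length-$2$ sub-sequence $(\alpha,\beta)$ with $\alpha \smallsmile \beta = 0$ admits a $\dbU_3$-realization directly from the vanishing of the single cup product; and sub-sequences of length at least $3$ are handled by the inductive hypothesis. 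This yields homomorphisms $\rho^{(1)} \colon G \to \dbU_i$ and $\rho^{(2)} \colon G \to \dbU_{n-i+1}$ whose $(j,j+1)$-entries recover $\alpha_j$ for $j < i$ and $\alpha_{i+j}$ for $1 \leq j \leq n-i$, respectively.

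The key step is then to assemble these into the block-diagonal homomorphism
\[
 \rho(g) = \begin{pmatrix} \rho^{(1)}(g) & 0 \\ 0 & \rho^{(2)}(g) \end{pmatrix} \in \dbU_{n+1},
\]
which is upper unitriangular because $i + (n - i + 1) = n + 1$. A direct inspection shows $\rho_{j,j+1} = \alpha_j$ for every $j \neq i$, while $\rho_{i,i+1} = 0 = \alpha_i$ since the $(i,i+1)$-entry falls in the zero off-diagonal block. Another application of Proposition~\ref{prop:masse unip} then yields $0 \in \langle \alpha_1, \ldots, \alpha_n \rangle$, closing the induction.

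The main obstacle is essentially bookkeeping---matching indices correctly across the boundary cases ($i = 1$ or $i = n$, multiple trivial entries, very short sub-sequences)---rather than any fundamentally new cohomological input. Once the unipotent reformulation of Massey vanishing (Proposition~\ref{prop:masse unip}) is in hand, the block-diagonal construction delivers the passage from non-trivial sequences to arbitrary ones essentially for free.
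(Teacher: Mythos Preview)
Your argument is correct. The paper does not give its own proof of Proposition~\ref{prop:trivial strong} but simply cites \cite[Prop.~2.8]{cq:massey}; the block-diagonal splicing of unitriangular representations that you carry out is precisely the standard argument used there, so your approach coincides with the intended one.
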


 Now fix $n\geq2$.
 Here we will write $\dbU$ instead of $\dbU_{n+1}$, and $I$ instead of $I_{n+1}$.
 For every $k\geq1$ let $\dbU_{(k)}$ denote the $k$-th term of the descending central series of $\dbU$: namely, $\dbU_{(k)}$ is the subgroup of $\dbU$ whose elements have $(i,j)$-entries equal to 0 if $0<j-i<k$.
 For example, 
 $$\dbU_{(1)}=\dbU\qquad\text{and}\qquad \dbU_{(n)}=\left\{\:I+E_{1,n+1}a\:\mid\: a\in\F_p\:\right\},$$
 while $\dbU_{(k)}$ is the trivial subgroup if $k>n$.
 
 From \cite[Prop.~2.10]{cq:massey} one deduces the following.
 
 \begin{lem}\label{lemma:BAC}
 Let $A=(a_{i,j})\in\dbU$ a matrix such that $a_{1,2}=a_{2,3}=\ldots=a_{n,n+1}=1$.
 For every $\mu\in\Z_2$ multiple of 4, there exists a matrix $C(\mu)\in\dbU_{(3)}$ such that 
\begin{equation}\label{eq:CAA}
 [C(\mu),A]=A^\mu.
 \end{equation}
 \end{lem}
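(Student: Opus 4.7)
The plan has two steps: first, show that $A^\mu\in\dbU_{(4)}$ whenever $\mu\in 4\Z_2$; second, realize every element of $\dbU_{(4)}$ as a commutator $[C,A]$ with $C\in\dbU_{(3)}$. The second step is essentially the content of \cite[Prop.~2.10]{cq:massey}, so the deduction of the lemma reduces to verifying the first step and checking that the commutator map has the claimed surjectivity.

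For the first step, write $A=I+N$ with $N$ strictly upper triangular and $N_{i,i+1}=1$ for every $i$. Since $N^{n+1}=0$, the binomial theorem gives
\[
A^\mu=I+\sum_{k=1}^n\binom{\mu}{k}N^k.
\]
By Lucas' theorem, $\binom{\mu}{k}\equiv 0\pmod 2$ whenever $k$ has a binary digit set where $\mu$ does not. Since $\mu\equiv 0\pmod 4$, the two least-significant binary digits of $\mu$ vanish, forcing $\binom{\mu}{k}\equiv 0\pmod 2$ for $k\in\{1,2,3\}$; hence $A^\mu\in\dbU_{(4)}$.

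For the second step, I would argue by successive approximation along the finite filtration $\dbU_{(4)}\supseteq\dbU_{(5)}\supseteq\cdots\supseteq\{I\}$. A direct computation shows that for $C=I+M\in\dbU_{(k)}$ one has $[C,A]\equiv I+(MN-NM)\pmod{\dbU_{(k+2)}}$; hence the associated graded map
\[
\overline{\phi}_k\colon\dbU_{(k)}/\dbU_{(k+1)}\longrightarrow \dbU_{(k+1)}/\dbU_{(k+2)}
\]
sends the tuple of $k$-th superdiagonal entries $(m_1,\ldots,m_{n+1-k})$ of $M$ to $(m_i+m_{i+1})_{i=1}^{n-k}$, which is visibly surjective (assign $m_1$ freely and solve inductively). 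Starting from $V=A^\mu\in\dbU_{(4)}$ and using $\overline{\phi}_3$, I pick $C_1\in\dbU_{(3)}$ with $[C_1,A]V^{-1}\in\dbU_{(5)}$; iterating with correction factors in progressively deeper strata (using $\overline{\phi}_4,\overline{\phi}_5,\ldots$), the residual discrepancy is pushed one level deeper at each step, and the procedure terminates at some $C\in\dbU_{(3)}$ with $[C,A]=A^\mu$.

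The main subtlety to verify is that each new correction $D$ does not undo cancellations already achieved at shallower levels. This is handled by the commutator identity $[C'D,A]=C'[D,A]{C'}^{-1}\cdot[C',A]$ together with the inclusion $[\dbU_{(3)},\dbU_{(\ell)}]\subseteq\dbU_{(\ell+3)}$: the conjugation $C'[D,A]{C'}^{-1}$ differs from $[D,A]$ only by elements lying strictly below the current working level, so the previously achieved cancellations are preserved.
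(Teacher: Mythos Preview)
Your proof is correct and follows essentially the same two-step strategy as the paper: first show $A^\mu\in\dbU_{(4)}$, then invoke \cite[Prop.~2.10]{cq:massey} to obtain $C(\mu)\in\dbU_{(3)}$. The paper dispatches both steps in one line each (observing directly that the low superdiagonals of $A^4$ vanish, then citing the proposition as a black box), whereas you unpack the argument further---using Lucas' theorem for the first step and sketching the filtration/successive-approximation proof of the cited proposition for the second---but the underlying route is the same.
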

 
 \begin{proof}
 First, we observe that the order of $A$ is a power of 2 with exponent at least 2, as $\dbU$ is a finite 2-group, and $A^2\neq I$ as $n\geq3$.
 
 Since the $(i,i+1)$- and $(i,i+2)$-entries of $A^4$ are 0, $A^\mu$ lies in the subgroup $\dbU_{(4)}$.
 Then by \cite[Prop.~2.10]{cq:massey} there exists a matrix $C(\mu)\in\dbU_{(4-1)}$ satisfying \eqref{eq:CAA}.
 \end{proof}


 \section{Oriented pro-$p$ groups}\label{sec:or}
 
 \subsection{Orientations of pro-$p$ groups}\label{ssec:or}
 Let 
 \[
  1+p\Z_p=\left\{\:1+p\lambda\:\mid\:\lambda\in\Z_p\:\right\}
 \]
be the multiplicative group of principal units of $\Z_p$.
 Recall that for $p=2$ one has
 $$1+2\Z_2=\{\pm1\}\times(1+4\Z_2)\simeq (\Z/2)\times \Z_2,$$
 where the right-side isomorphism is an isomorphism of abelian pro-2 groups.
A homomorphism of pro-$p$ groups $\theta\colon G\to1+p\Z_p$ is called an {\sl orientation}, and the pair $(G,\theta)$ is called an {\sl oriented pro-$p$ group} (cf. \cite{qw:cyc}).

The most important example of oriented pro-$p$ groups comes from Galois theory.
 
 \begin{exam}\label{exam:cyclo char}\rm
 Let $\K$ be a field containing a root of 1 of order $p$.
 If $\zeta$ is a root of 1 of order a power of $p$ lying in the separable closure of $\K$, then $\zeta$ lies also in the maximal pro-$p$-extension of $\K$, whose Galois group is the maximal pro-$p$ Galois group $G_{\K}(p)$.
 The pro-$p$ group $G_{\K}(p)$ may be endowed with a natural orientation, the {\sl $p$-cyclotomic character} $\theta_{\K}\colon G_{\K}(p)\to1+p\Z_p$, satisfying
\[
 g(\zeta)=\zeta^{\theta_{\K}(g)}\qquad\forall\:g\in G_{\K}(p)
\]
(cf. \cite[\S~4]{eq:kummer}).
The image of $\theta_{\K}$ is $1+p^f\Z_2$, where $f$ is the maximal positive integer such that $\K$ contains the roots of 1 of order $p^f$ --- if such a number does not exists, i.e., if $\K$ contains all roots of 1 of $p$-power order, then $\Img(\theta_{\K})=\{1\}$, and one sets $f=\infty$.

\noindent
Observe that if $p=2$ then $\Img(\theta_{\K})\subseteq1+4\Z_2$ if $\sqrt{-1}\in\K$;
 on the other hand $\Img(\theta_{\K})=\{\pm1\}$ if $\sqrt{-1}\notin \K$ and $\K(\sqrt{-1})$ contains all roots of 1 of order a power of 2.
 \end{exam}

 From now on, given an orientation $\theta\colon G\to1+p\Z_p$ of a pro-$p$ group $G$, the notation $\Img(\theta)=1+p^\infty\Z_p$ will mean that the image of $\theta$ is trivial. 
 
In the family of oriented pro-$p$ groups one has the following two constructions (cf. \cite[\S~3]{efrat:small}).
\begin{itemize}
 \item[(a)] Let $(G_0,\theta)$ be an oriented pro-2 group, and let $Z$ be a free abelian pro-2 group.
 The {\sl semidirect product} $(Z\rtimes_\theta G_0,\tilde\theta)$ is the oriented pro-$p$ group where $Z\rtimes_\theta G_0$ is the semidirect product of pro-$p$ groups with action $gzg^{-1}=z^{\theta(g)}$ for every $g\in G_0$ and $z\in Z$, and where 
 $$\tilde\theta\colon Z\rtimes_\theta G_0\longrightarrow 1+p\Z_p$$
 is the orientation induced by $\theta$, i.e., $\tilde\theta=\theta\circ\pi$, where $\pi\colon Z\rtimes_\theta G_0\to G_0$ is the canonical projection.
 \item[(b)] Let $(G_1,\theta_1),(G_2,\theta_2)$ be two oriented pro-$p$ groups.
 The {\sl free product} $(G_1\ast G_2,\theta)$ is the oriented pro-$p$ group, while $$\theta\colon G_1\ast G_2\longrightarrow1+p\Z_p$$
 is the orientation induced by the orientations $\theta_1,\theta_2$ via the universal property of the free pro-$p$ product.
\end{itemize}

\subsection{Demushkin pro-2 groups and orientations}\label{ssec:Demushkin or}
Another important source of oriented pro-$p$ groups are Demushkin pro-$p$ groups.
A Demushkin group $G$ is a pro-$p$ group whose $\F_p$-cohomology algebra satisfies the following three conditions:
\begin{itemize}
 \item[(i)] the dimension of $\rmH^1(G)$ is finite, i.e., $G$ is finitely generated;
 \item[(ii)] the dimension of $\rmH^2(G)$ is one, i.e., $G$ has one defining relation;
 \item[(iii)] the cup-product induces a non-degenerate pairing
 \[
 \xymatrix{ \rmH^1(G)\times \rmH^1(G)\ar[r]^-{\smallsmile} & \rmH^2(G) }
 \]
\end{itemize}
(cf., e.g., \cite[Def.~3.9.9]{nsw:cohn}).
Demushkin groups have been studied and classified by S.P.~Demu\v{s}kin, J-P.~Serre and J.P.~Labute (cf. \cite{Demushkin, serre:Demushkin, labute}). 
In particular, Serre proved that every Demushkin group may be endowed with a canonical orientation $\theta_G\colon G\to1+p\Z_p$.

By condition~(ii) above, a Demushkin pro-$p$ group $G$ has a single defining relation, namely,
\begin{equation}\label{eq:pres Demushkin}
 G=\left\langle\:x_1,\ldots,x_d\:\mid\:r=1\:\right\rangle,\qquad d=\dim(\rmH^1(G)),
\end{equation}
yielding the dual bases $\{\chi_1,\ldots,\chi_d\}$ of $\rmH^1(G)$.

Henceforth we focus on Demushkin pro-2 groups.
Altogether, one has four cases, depending on whether $d$ is even or odd, and on the shape of the defining relation $r$
(cf. \cite{labute}).


\subsubsection{First case: $\Img(\theta)\subseteq1+4\Z_2$}
The number $d$ is even and the defining relation is
 \[
 r=[x_1,x_2]x_2^{-2^f}[x_3,x_4]\cdots[x_{d-1},x_d],
 \]
for some $f\in\{2,3,4,\ldots,\infty\}$.
The canonical orientation $\theta_G\colon G\to1+2\Z_2$ is defined by
$$\theta_G(x_1)=1+2^f\qquad\text{and}\qquad \theta_G(x_h)=1\text{ for }h=2,\ldots,d;$$
thus $\Img(\theta_G)=1+2^f\Z_2$.
This is the only case with $\Img(\theta_G)\subseteq 1+4\Z_2$, and the only case of Demushkin pro-2 groups taken into consideration in \cite{cq:massey}.

\noindent
Moreover, $\rmH^2(G)$ is generated by $\chi_1^2$, and one has the relations
\[
 \chi_1^2=\chi_2\smallsmile\chi_3=\ldots=\chi_{d-1}\smallsmile\chi_{d}
\]
and $\chi_{h}\smallsmile\chi_{h'}=0$ for any other pair $(h,h')$, with $h\leq h'$, different to $(1,2)$, $(3,4)$, \ldots, $(d-1,d)$.

\begin{rem}\rm
 Usually the defining relations of a Demushkin pro-$p$ group are described adopting the convention $[x,y]=x^{-1}y^{-1}xy$ (this is the case, for example, of \cite{labute} and \cite[Ch.~III, \S~9]{nsw:cohn}).
 According to the aforementioned convention, the defining relation of this case reads
 \[
  r=x_2^{2^f}[x_2,x_1][x_4,x_3]\cdots[x_{d},x_{d-1}],  \]
and analogously for the next cases.
\end{rem}

\subsubsection{Second case: $-1\in\Img(\theta)$, $d$ even}
The number $d$ is even and the defining relation is
 \[
  r=[x_1,x_2]x_2^{2}[x_3,x_4]x_4^{-2^f}[x_5,x_6]\cdots[x_{d-1},x_d],
 \]
for some $f\in\{2,3,4,\ldots,\infty\}$.
The canonical orientation $\theta_G\colon G\to1+2\Z_2$ is defined by
$$\theta_G(x_1)=-1,\qquad \theta_G(x_3)=1+2^f,\qquad\text{and}\qquad\theta_G(x_h)=1\text{ for }h=2,4,\ldots,d;$$
thus $\Img(\theta_G)=\{\pm1\}\times(1+2^f\Z_2)$.

\noindent
Moreover, $\rmH^2(G)$ is generated by $\chi_1\smallsmile\chi_2$, and one has the relations
\[
 \chi_1\smallsmile\chi_2=\chi_2^2=\chi_3\smallsmile\chi_4=\chi_5\smallsmile\chi_6=\ldots=\chi_{d-1}\smallsmile\chi_{d}
\]
and $\chi_{h}\smallsmile\chi_{h'}=0$ for any other pair $(h,h')$, with $h\leq h'$, different to $(1,2)$, $(2,2)$, $(3,4)$, \ldots, $(d-1,d)$.

\subsubsection{Third case: $\Img(\theta)=\langle-1+2^f\rangle$}

The number $d$ is even and 
the defining relation is 
\[r=[x_1,x_2]x_2^{2-2^f}[x_3,x_4]\cdots[x_{d-1},x_d],
 \]
for some $f\in\{2,3,4,\ldots,\infty\}$.
The canonical orientation $\theta_G\colon G\to1+2\Z_2$ is defined by
$$\theta_G(x_1)=-1+2^f\qquad\text{and}\qquad\theta_G(x_h)=1\text{ for }h=2,\ldots,d;$$
thus $\Img(\theta_G)=\langle-1+2^f\rangle\simeq\Z_2$.

\noindent
Moreover, $\rmH^2(G)$ is generated by $\chi_1\smallsmile\chi_2$, and one has the relations
\[
 \chi_1\smallsmile\chi_2=\chi_2^2=\chi_3\smallsmile\chi_4=\ldots=\chi_{d-1}\smallsmile\chi_{d}
\]
and $\chi_{h}\smallsmile\chi_{h'}=0$ for any other pair $(h,h')$, with $h\leq h'$, different to $(1,2)$, $(2,2)$, $(3,4)$, \ldots, $(d-1,d)$.

\subsubsection{Fourth case: $d$ odd}\label{sssec:C2}

The number $d$ is odd and the defining relation is
\[
  r=x_1^2[x_2,x_3]x_3^{-2^f}[x_4,x_5]\cdots[x_{d-1},x_d],
 \]
for some $f\in\{2,3,4,\ldots,\infty\}$.
Observe that if $d=1$ then $G$ is a cyclic group of order 2 (in this case we set implicitly $f=\infty$).
The canonical orientation $\theta_G\colon G\to1+2\Z_2$ is defined by
$$\theta_G(x_1)=-1,\qquad \theta_G(x_2)=1+2^f,\qquad\text{and}\qquad\theta_G(x_h)=1\text{ for }h=3,\ldots,d;$$
thus $\Img(\theta_G)=\{\pm1\}\times(1+2^f\Z_2)$.

\noindent
Moreover, $\rmH^2(G)$ is generated by $\chi_1\smallsmile\chi_2$, and one has the relations
\[
\chi_1^2= \chi_2\smallsmile\chi_3= \chi_4\smallsmile\chi_5=\ldots=\chi_{d-1}\smallsmile\chi_{d}
\]
and $\chi_{h}\smallsmile\chi_{h'}=0$ for any other pair $(h,h')$, with $h\leq h'$, different to $(1,1)$, $(2,3)$, $(4,5)$, \ldots, $(d-1,d)$.

 \begin{rem}\label{rem:Demushkin}\rm
If $\K$ is an $\ell$-adic local field, with $\ell$ an odd prime, 
then its maximal pro-$2$ Galois group $G_{\K}(2)$ is a 2-generated Demushkin pro-2 group (cf. \cite[Prop.~7.5.9]{nsw:cohn}).
On the other hand, if $\K$ is a 2-adic local field, then its maximal pro-$2$ Galois group $G_{\K}(2)$ is a Demushkin pro-2 group  
with $d= [\K:\Q_p]+2$ generators (cf. \cite[Thm.~7.5.11]{nsw:cohn}).
Finally, $\Z/2$ (which is the Demushkin pro-2 group with defining relation (d) and $d=1$) is the maximal pro-$2$ Galois group of $\K=\dbR$.
It is not known whether {\sl every} Demushkin pro-2 group occurs as the maximal pro-$2$ Galois group of a field --- for example, this is not known for \[
 G=\langle\:x_1,x_2,x_3\:\mid\:x_1^2[x_2,x_3]=1\:\rangle,
\]
 (cf. \cite[Rem.~5.5]{jacobware}, see also \cite[Rem~3.3]{cq:massey}).
 Yet, if a Demushkin pro-2 group does, then necessarily the canonical orientation coincides with the 2-cyclotomic character (cf. \cite[Thm.~4]{labute}). 
\end{rem}

\subsection{Oriented pro-$p$ groups of elementary type}\label{ssec:etc}

The following definition is due to I.~Efrat.

\begin{defin}\label{defin:ET}\rm
  The family $\mathcal{E}_p$ of {\sl oriented pro-$p$ groups of elementary type} is the smallest family of oriented pro-$p$ groups containing
 \begin{itemize}
  \item[(a)] every oriented pro-$p$ group $(G,\theta)$, where $G$ is a finitely generated free pro-$p$ group, and $\theta\colon F\to1+p\Z_p$ is arbitrary (including $G=\{1\}$),
  \item[(b)] every Demushkin pro-$p$ group together with its canonical orientation $(G,\theta_G)$;
 \end{itemize}
and such that 
\begin{itemize}
 \item[(c)] if $(G_0,\theta)$ is an oriented pro-$p$ group of elementary type, then also the semidirect product $(\Z_p\rtimes_\theta G_0,\tilde\theta)$, as defined in \S~\ref{ssec:or}, is in $\mathcal{E}_p$,
 \item[(d)]if $(G_1,\theta_1),(G_2,\theta_2)$ are two oriented pro-$p$ groups of elementary type, then also the free product $(G_1\ast G_2,\theta)$, as defined in \S~\ref{ssec:or}, is in $\mathcal{E}_p$.
\end{itemize}
\end{defin}

\begin{rem}\label{rem:subgroups et}\rm
If $(G,\theta)$ is an oriented pro-2 group of elementary type, and $H$ is a finitely generated subgroup of $G$, then also the oriented pro-2 group $(H,\theta\vert_H)$ is of elementary type (cf., e.g., \cite[Rem.~5.10--(b)]{qw:cyc}).
\end{rem}

\begin{exam}\label{exam:et}\rm
The oriented pro-2 group
\[
 (G,\theta)=Z_2\rtimes_{\theta_2'}\left((Z_1\rtimes_{\theta_1'}((G_1,\theta_1)\ast(G_2,\theta_2)))\ast(G_3,\theta_3)\right)
\]
--- where $G_1$ and $G_3$ are Demushkin pro-2 groups,
 \[
  G_1=\left\langle\:x_1,x_2,x_3\:\mid\:x_1^2[x_2,x_3]=1\:\right\rangle,\qquad
  G_3=\left\langle\:x_5,x_6\:\mid\:[x_5,x_6]x_6^{-2}=1\:\right\rangle,
 \]
$G_2=\langle x_4\rangle\simeq\Z_2$ is free on one generator with $\theta_2(x_4)=1+4$, and $Z_i=\langle z_i\rangle\simeq\Z_2$ ---
is an oriented pro-2 group of elementary type.
\end{exam}

The Elementary Type Conjecture asks whether $(G_{\K}(p),\theta_{\K})\in\mathcal{E}_p$ for every field $\K$ containing a root of 1 of order $p$ such that $[\K^\times:(\K^\times)^p]<\infty$ (cf. \cite{ido:etc}, see also \cite[Question~4.8]{ido:etc2}, \cite[\S~10]{marshall}, \cite[\S~7.5]{qw:cyc} and \cite[Conj.~4.8]{MPQT}).

Now let $\mathcal{EE}_2$ be the smallest subfamily of $\mathcal{E}_2$ containing the trivial group $\{1\}$ with the trivial orientation, the oriented pro-2 group $(G,\theta_G)$ with $G$ the cyclic group of order 2 and $\theta_G\colon G\to\{\pm1\}$ --- namely, $G$ is the Demushkin pro-2 group with $d=1$ ---; and closed with respect to semidirect products with $\Z_2$, and free pro-2 groups.
In other words, $\mathcal{EE}_2$ consists of those oriented pro-2 groups of elementary type obtained using only cyclic groups of order 2 as ``building bricks''.

By the work of J.~Mina\v{c} \cite{minac:pyt}, one has the following (see also \cite[\S~6.2]{MPQT}).

\begin{prop}\label{prop}
 Let $\K$ be a field with a finite number of square classes.
 Then $\K$ is Pythagorean if, and only if, $(G_{\K}(2),\theta_{\K})\in\mathcal{EE}_2$.
\end{prop}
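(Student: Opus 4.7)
The plan is to deduce the proposition directly from J.~Mina\v{c}'s classification of the maximal pro-$2$ Galois group of a Pythagorean field with finitely many square classes \cite{minac:pyt}, taking care to match the cyclotomic orientation with the orientation produced by the inductive definition of $\mathcal{EE}_2$.

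For the forward direction, assume $\K$ is Pythagorean with $[\K^\times:(\K^\times)^2]<\infty$. First I would dispose of the degenerate case $\sqrt{-1}\in\K$: the identity $a=\left(\frac{a+1}{2}\right)^2+(-1)\left(\frac{a-1}{2}\right)^2$ combined with the Pythagorean property shows that every element of $\K$ is a square, hence $G_{\K}(2)=\{1\}$, which lies in $\mathcal{EE}_2$. Otherwise $-1\notin(\K^\times)^2$ and $\K$ is formally real. Then Mina\v{c}'s structure theorem yields a description of $G_{\K}(2)$ obtained from copies of $\Z/2$ (the absolute Galois groups of the real closures of suitable subfields) by iterated free pro-$2$ products and semidirect products $A\rtimes G_0$, where $A$ is a finitely generated free abelian pro-$2$ group acted on by inversion. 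Writing $A$ as a product of copies of $\Z_2$, each such semidirect product corresponds to iterated applications of construction~(c) of Definition~\ref{defin:ET}, and the free pro-$2$ products correspond to construction~(d); hence the underlying pro-$2$ group is of elementary type built from $\Z/2$'s, so that $G_{\K}(2)\in\mathcal{EE}_2$ as a bare pro-$2$ group. It then remains to check that $\theta_{\K}$ agrees with the orientation produced inductively: on each $\Z/2$ brick $\theta_{\K}$ is necessarily the unique nontrivial map to $\{\pm1\}$, because complex conjugation acts by inversion on $2$-power roots of unity; compatibility with the operations in \S\ref{ssec:or} is then immediate from the universal properties of free pro-$2$ products and from the fact that $\theta_{\K}$ descends through the canonical projection $A\rtimes G_0\to G_0$.

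For the converse, assume $(G_{\K}(2),\theta_{\K})\in\mathcal{EE}_2$. A straightforward induction on the construction of $\mathcal{EE}_2$ shows that $\Img(\theta_{\K})\subseteq\{\pm1\}$, the base case being immediate and both operations~(c) and (d) preserving this property. Translating via Example~\ref{exam:cyclo char}, this forces either $G_{\K}(2)=\{1\}$ (and $\K$ is quadratically closed, in particular Pythagorean) or $\sqrt{-1}\notin\K$. In the latter case the inductive $\mathcal{EE}_2$ description, together with the orientation, pins down exactly the cohomological invariants of $\K$ appearing in Mina\v{c}'s characterization of formally real Pythagorean fields with finitely many square classes: the only relations among the $\chi_a\smallsmile\chi_b$ in $\rmH^*(G_{\K}(2),\F_2)$ forced by the elementary type data are those produced by the $\Z/2$ bricks and the semidirect-product bricks, and these relations are precisely the ones characterizing a torsion-free Witt ring $W(\K)$, equivalently, the Pythagorean property.

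The main obstacle is the converse direction: converting an abstract $\mathcal{EE}_2$ description of the pair $(G_{\K}(2),\theta_{\K})$ into the arithmetic statement that every sum of two squares in $\K$ is a square. The most economical route, which I would adopt in practice, is to invoke Mina\v{c}'s theorem as the actual biconditional that has already been established in \cite{minac:pyt} (see also \cite[\S~6.2]{MPQT}), so that the proposition becomes a mere reformulation of his classification in the oriented-pro-$p$ language of Definition~\ref{defin:ET} and \S\ref{ssec:or}; the only work left is the bookkeeping argument identifying $\theta_{\K}$ with the inductively-built orientation, which is what the first paragraph above carries out.
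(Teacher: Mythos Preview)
The paper does not actually supply a proof of this proposition: it is stated immediately after the sentence ``By the work of J.~Mina\v{c} \cite{minac:pyt}, one has the following (see also \cite[\S~6.2]{MPQT})'' and is treated as a citation, not as something to be argued. Your proposal correctly identifies this---your ``most economical route'' of invoking Mina\v{c}'s theorem as an already-established biconditional and then checking that the orientation $\theta_{\K}$ matches the one produced inductively by the $\mathcal{EE}_2$ construction is exactly what the paper has in mind, and your bookkeeping for the orientation (trivial on abelian factors, the unique nontrivial map on each $\Z/2$ brick, compatibility via universal properties) is sound.

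Your attempt at an independent argument for the converse, however, is not complete as written. Knowing only that $\Img(\theta_{\K})\subseteq\{\pm1\}$ and that $(G_{\K}(2),\theta_{\K})$ has the $\mathcal{EE}_2$ shape does constrain the $\F_2$-cohomology ring of $G_{\K}(2)$, but the passage from ``the cup-product relations are those of an $\mathcal{EE}_2$ group'' to ``$W(\K)$ is torsion-free'' is precisely the arithmetic input contained in Mina\v{c}'s theorem (or in the Witt-ring/SAP characterizations of Pythagorean fields it relies on), and you have not supplied that step---you have only asserted it. This is not a flaw in your overall proposal, since you explicitly fall back on the citation; but you should be aware that the cohomological paragraph does not stand on its own as a proof of the converse.
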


\begin{rem}\label{rem:pyt}\rm
A Pythagorean field $\K$ such that $\sqrt{-1}\notin\K$ (and thus such that $\Img(\theta_{\K})\not\subseteq1+4\Z_2$, cf. Ex.~\ref{exam:cyclo char}) is {\sl formally real}.
On the other hand, if $\sqrt{-1}\in\K$, then $(\K^\times)^2=\K^\times$ (cf. \cite[p.~255]{lam}), and thus $G_{\K}(2)$ is trivial.
\end{rem}

Altogether, for $p=2$ one has the following (cf., e.g., \cite[\S~6]{MPQT}).

\begin{prop}\label{prop:etc fields}
 Let $\K$ be a field with a finite number of square classes.
 Then $(G_{\K}(2),\theta_{\K})\in\mathcal{E}_2$ in the following cases:
\begin{itemize}
  \item[(i)] $\K$ is finite;
\item[(ii)] $\K$ is a PAC field, or an extension of relative transcendence degree 1 of a PAC field;
\item[(iii)] $\K$ is a local field, or an extension of transcendence degree 1 of a local field, with characteristic not $p$;
\item[(iv)] $\K$ is $2$-rigid {\rm (}cf. \cite[p.~722]{ware}{\rm)};
\item[(v)] $\K$ is an algebraic extension of a global field of characteristic not $2$;
\item[(vi)] $\K$ is a valued $2$-Henselian field with residue field $\kappa$, and $(G_{\kappa}(p),\theta_{\kappa})\in\mathcal{E}_2$;
\item[(vii)] $\K$ is a Pythagorean field.
 \end{itemize}
\end{prop}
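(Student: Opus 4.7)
The plan is to verify $(G_{\K}(2),\theta_{\K})\in\mathcal{E}_2$ case by case for (i)--(vii), in each case invoking a known structural theorem for $G_{\K}(2)$ and checking that the output fits one of the building blocks (a), (b) or one of the closure operations (c), (d) of Definition~\ref{defin:ET}. The standing assumption that $\K$ has finitely many square classes ensures, via Kummer theory (Remark~\ref{rem:kummer}), that $G_{\K}(2)$ is finitely generated, so every decomposition we quote involves only finitely many building blocks.

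First I would dispose of the cases that are essentially immediate: (i) is clear because $G_{\K}(2)\simeq\Z_2$ is free pro-2 of rank one, hence in $\mathcal{E}_2$ by (a); (vii) is exactly Proposition~\ref{prop}; (v) is the main theorem of \cite{ido:etc2}. For (ii), the absolute Galois group of a PAC field is projective, so $G_{\K}(2)$ is a free pro-2 group and lies in $\mathcal{E}_2$ by (a); the transcendence degree 1 enhancement follows from \cite[Ch.~11]{friedjarden} combined with \cite[\S~5]{ido:Hasse}. For (iii), Remark~\ref{rem:Demushkin} identifies $G_{\K}(2)$ as a Demushkin pro-2 group whose canonical orientation coincides with $\theta_{\K}$, placing it in $\mathcal{E}_2$ via (b); the transcendence degree 1 case is treated by Efrat's function-field analysis, which decomposes $G_{\K}(2)$ as a free pro-2 product of the local piece with a free pro-2 factor, invoking (b) and (d). For (iv), the structure theorem of Ware \cite{ware} exhibits the maximal pro-2 Galois group of a 2-rigid field as an iterated semidirect product obtained from a free pro-2 group by successively applying construction (c).

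For (vi), I would invoke the classical Henselian structure theorem: it realises $G_{\K}(2)$ as a (finite, by the square-class hypothesis) iterated semidirect product of free abelian pro-2 groups with $G_{\kappa}(2)$, the action in each step being given by the 2-cyclotomic character, so that there is a split extension
\[
 1\longrightarrow T\longrightarrow G_{\K}(2)\longrightarrow G_{\kappa}(2)\longrightarrow 1
\]
with $T$ a finitely generated free abelian pro-2 group. Combined with the assumption $(G_{\kappa}(2),\theta_{\kappa})\in\mathcal{E}_2$ and repeated application of (c), this yields the conclusion.

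The only nontrivial bookkeeping --- and the step I would take the most care over --- is checking, in each case, that the canonical orientation $\theta_{\K}$ on $G_{\K}(2)$ agrees with the orientation produced by the elementary-type recipe. This is vacuous for free pro-2 groups; it is the content of a theorem of Serre--Labute for Demushkin groups (Remark~\ref{rem:Demushkin}); and it is preserved by the free product and semidirect product operations of \S\ref{ssec:or} by their very definition. Hence no genuinely new computation is required once the decomposition of $G_{\K}(2)$ is in hand.
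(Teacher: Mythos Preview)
The paper does not actually give a proof of this proposition: it is stated with a bare reference, ``cf., e.g., \cite[\S~6]{MPQT}'', and no argument is supplied in the text. Your case-by-case sketch is precisely the kind of expansion that citation points to, and it is correct in outline; the individual references you invoke (finite fields, Demu\v{s}kin groups via Remark~\ref{rem:Demushkin}, \cite{ido:etc2} for (v), \cite{ware} for (iv), the Henselian decomposition for (vi), Proposition~\ref{prop} for (vii)) are the right ones. So your proposal is consistent with, and more detailed than, what the paper itself provides.
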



 \section{Massey products and pro-2 groups of elementary type}\label{sec:main}

\subsection{Demushkin pro-2 groups and Massey products}\label{ssec:Demushkin}

A.~P\'al and E.~Szab\'o proved that for every prime $p$, every Demushkin pro-$p$ group satisfies the strong $n$-Massey vanishing property with respect to $p$ for every $n>2$.(cf. \cite[Thm.~3.5]{pal:Massey}).
Thus, for $p=2$ one has the following.

\begin{prop}\label{prop:Demushkin}
 Every Demushkin pro-2 group has the strong $n$-Massey vanishing property with respect to 2 for every $n>2$.
\end{prop}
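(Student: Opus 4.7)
The plan is to invoke the Pál--Szabó theorem \cite[Thm.~3.5]{pal:Massey} cited in the paragraph immediately preceding the proposition: their result establishes the strong $n$-Massey vanishing property with respect to $\F_p$ for every Demushkin pro-$p$ group, every prime $p$, and every $n > 2$. Specializing to $p = 2$ yields the statement verbatim. So my proof is essentially a one-line appeal to a published result.

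What is perhaps worth noting explicitly (since this is precisely the case absent from the author's earlier work \cite{cq:massey}) is the degenerate case $G = \Z/2$, i.e.\ the Fourth Case of \S\ref{ssec:Demushkin or} with $d = 1$. Here $\rmH^1(G) = \F_2\chi$ with $\chi^2 \neq 0$, so no sequence $\alpha_1, \ldots, \alpha_n$ of \emph{nontrivial} elements of $\rmH^1(G)$ can satisfy the triviality condition $\alpha_i \smallsmile \alpha_{i+1} = 0$ for all $i = 1, \ldots, n-1$. By Proposition~\ref{prop:trivial strong} the strong $n$-Massey vanishing property holds vacuously for $G = \Z/2$, for every $n \geq 3$. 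This elementary observation is what allows the cyclic group of order 2 to be folded into the framework used for the other Demushkin pro-2 groups.

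If one preferred a self-contained argument covering the remaining Demushkin pro-2 groups (rather than citing Pál--Szabó), the approach would go via Proposition~\ref{prop:masse unip}: given a sequence $\alpha_1, \ldots, \alpha_n$ of elements of $\rmH^1(G)$ satisfying \eqref{eq:cup 0}, one constructs a continuous homomorphism $\rho\colon G \to \dbU_{n+1}$ with $\rho_{i,i+1} = \alpha_i$. The natural strategy is to write $\alpha_i$ as $\sum_k c_{i,k}\chi_k$ in the dual basis of the minimal presentation \eqref{eq:pres Demushkin}, assign $\rho(x_k)$ to be a suitable upper unitriangular matrix whose $(i,i+1)$-entries match the $c_{i,k}$, and use the nondegeneracy of the cup product pairing plus the explicit form of the defining relation $r$ (one of the four shapes listed in \S\ref{ssec:Demushkin or}) to arrange $\rho(r) = I$.

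The hard part in such a direct approach would be absorbing the power factor $x_2^{\pm 2^f}$ (or $x_4^{\pm 2^f}$, or $x_1^2$ in the Fourth Case) appearing in $r$: a naive assignment makes $\rho(r)$ land in $\dbU_{(3)}$ or $\dbU_{(4)}$ rather than at the identity. This is exactly where Lemma~\ref{lemma:BAC} enters, allowing one to modify the image of one generator by a commutator-correction $C(\mu)$ that cancels the residual $A^\mu$ coming from the squaring factor, at the cost of terms that vanish in higher filtration. The case analysis over the four shapes of the Demushkin relation is exactly Pál--Szabó's original argument, which is why citing \cite[Thm.~3.5]{pal:Massey} is the cleanest option.
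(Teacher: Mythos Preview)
Your proposal is correct and follows essentially the same approach as the paper: both cite P\'al--Szab\'o \cite[Thm.~3.5]{pal:Massey} for the general statement and then single out the degenerate case $G=\Z/2$ for a direct elementary check. The only cosmetic difference is that you dispose of $\Z/2$ vacuously via Proposition~\ref{prop:trivial strong}, whereas the paper instead constructs the homomorphism $\rho$ explicitly using Proposition~\ref{prop:masse unip}; both arguments are valid and amount to the same observation that consecutive $\alpha_i,\alpha_{i+1}$ cannot both equal $\chi$.
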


In particular, the cyclic group $G=\langle x\mid x^2=1\rangle$ of order 2 --- which is a Demushkin pro-2 group, cf. \S~\ref{sssec:C2} --- has the strong $n$-Massey vanishing property with respect to 2 for every $n>2$.
Actually, this is easily shown using Proposition~\ref{prop:masse unip}.
Indeed, let $\chi\colon G\to\F_2$ the non-trivial homomorphism --- i.e., $\rmH^1(G)=\F_2.\chi$.
Then $\chi^2$ is not trivial by Lemma~\ref{lem:bock}.
Hence, a sequence $\alpha_1,\ldots,\alpha_n$ of elements of $\rmH^1(G)$ satisfies \eqref{eq:cup 0} if, and only if, given two consecutive terms $\alpha_i,\alpha_{i+1}$, they are not both equal to $\chi$, namely, at least one of them is the 0-map.
Hence, the matrix 
\[
 A=I_{n+1}+\alpha_1(x)E_{1,2}+\alpha_2(x)E_{2,3}+\ldots+\alpha_n(x)E_{n,n+1}\in\dbU_{n+1}
\]
has order 2, and the assignment $x\mapsto A$ yields a homomorphism $\rho\colon G\to\dbU_{n+1}$ satisfying the properties prescribed in Proposition~\ref{prop:masse unip}.

\subsection{The infinite dihedral pro-2 group}\label{ssec:dihedral}

Let 
$$G_1=\left\langle\: x_1\:\mid\: x_1^2=1\:\right\rangle\qquad\text{and}\qquad 
G_2=\left\langle\: x_2\:\mid\: x_2^2=1\:\right\rangle$$
be two cyclic groups of order 2.
Their free pro-2 product $G_1\ast G_2$ is the {\sl infinite dihedral pro-2 group} (i.e., the pro-2 completion of the abstract infinite dihedral group)
\[
 D_{2^\infty}=\left\langle\: x_1,x_2\:\mid\: x_1^2=x_2^2=1\:\right\rangle=\left\langle\: x,y\:\mid x^2=[x,y]y^2=1\:\right\rangle,
\]
with $x=x_1$ and $y=x_1x_2$.
In particular, if for $i=1,2$ we endow $G_i$ with the non-trivial orientation $\theta_i\colon G_i\to\{\pm1\}$, then
$$(D_{2^\infty},\theta_{D_{2^\infty}}):=(G_1,\theta_1)\ast(G_2,\theta_2)\simeq Z\rtimes_{\theta_1}(G_1,\theta_1), $$
with $Z=\langle x_1x_2\rangle\simeq\Z_2$.
Thus, the oriented pro-$2$ group $(D_{2^\infty},\theta_{D_{2^\infty}})$ is of elementary type.

Let $\{\chi_1,\chi_2\}$ and $\{\chi,\psi\}$ be the bases of $\rmH^1(D_{2^\infty})$ dual respectively to $\{x_1,x_2\}$ and to $\{x,y\}$.
Then $\chi=\chi_1+\chi_2$ and $\psi=\chi_2$.
Moreover, by \cite[Thm.~4.1.4]{nsw:cohn} one has 
\[
 \rmH^2(D_{2^\infty})=\rmH^2(G_1)\oplus\rmH^2(G_2)=\F_2.\chi_1^2\oplus\F_2.\chi_2^2,
\]
and hence $\{\chi^2,\chi\smallsmile\psi=\psi^2\}$ is another basis of $\rmH^2(D_{2^\infty})$.

Since both $G_1,G_2$ have the strong $n$-Massey vanishing property for every $n>2$, by Proposition~\ref{prop:free} also $D_{2^\infty}$ has the strong $n$-Massey vanishing property for every $n>2$.
This implies the following.

\begin{lem}\label{lemma:B2 I}
 For $n>2$, there exist matrices $A_1,A_2,B_1,B_2\in\dbU_{n+1}$ such that the $(i,i+1)$-entries of both $A_1$ and $A_2$ are equal to 1, for all $i=1,\ldots,n$, and 
 \[
  B_1=\left(\begin{array}{cccccc}1 & 1 & &&&\ast \\ &1&0&&& \\ &&1&1&& \\ &&&1&0& \\ &&&&\ddots&\ddots \\ &&&&&1
            \end{array}\right),\qquad
  B_2=\left(\begin{array}{cccccc}1 & 0 & &&&\ast \\ &1&1&&& \\ &&1&0&& \\ &&&1&1& \\ &&&&\ddots&\ddots \\ &&&&&1
            \end{array}\right) \]
satisfying $[B_1,A_1]=A_1^{-2}$ and $[B_2,A_2]=A_2^{-2}$.
\end{lem}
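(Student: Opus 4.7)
The plan is to realize the matrices $A_1,A_2,B_1,B_2$ as images of continuous homomorphisms $\rho_i\colon D_{2^\infty}\to\dbU_{n+1}$ ($i=1,2$), exploiting the strong $n$-Massey vanishing property of $D_{2^\infty}$, which is available by Propositions~\ref{prop:Demushkin} and~\ref{prop:free} applied to the decomposition $D_{2^\infty}\simeq G_1\ast G_2$. Since the defining relation $[x,y]y^2=1$ forces any such $\rho_i$ to satisfy $[\rho_i(x),\rho_i(y)]\rho_i(y)^2=I$, the assignment $A_i:=\rho_i(y)$, $B_i:=\rho_i(x)$ will automatically yield $[B_i,A_i]=A_i^{-2}$; the only real work is arranging the prescribed super-diagonal entries.

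To do this I would choose, in the basis $\{\chi,\psi\}$ of $\rmH^1(D_{2^\infty})$ dual to $\{x,y\}$, the sequences $\alpha_i=\chi+\psi$ if $i$ is odd and $\alpha_i=\psi$ if $i$ is even (for the first case), and conversely $\alpha_i'=\psi$ if $i$ is odd and $\alpha_i'=\chi+\psi$ if $i$ is even (for the second case), for $i=1,\ldots,n$. A direct evaluation gives $\alpha_i(y)=\alpha_i'(y)=1$ for all $i$, while the values of $\alpha_i(x)$ alternate as $1,0,1,0,\ldots$ and those of $\alpha_i'(x)$ as $0,1,0,1,\ldots$, matching exactly the super-diagonal entries prescribed for $A_i$ and $B_i$ in the two matrix displays.

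The one step that needs a small computation is the cup-product triviality condition \eqref{eq:cup 0}: both of the relevant consecutive pairs reduce to $(\chi+\psi)\smallsmile\psi=\chi\smallsmile\psi+\psi^2$. Writing $\chi=\chi_1+\chi_2$ and $\psi=\chi_2$ and using the direct-sum decomposition $\rmH^2(D_{2^\infty})=\F_2.\chi_1^2\oplus\F_2.\chi_2^2$ recalled just before the lemma, which forces $\chi_1\smallsmile\chi_2=0$, one obtains $\chi\smallsmile\psi=\chi_2^2=\psi^2$, whence $(\chi+\psi)\smallsmile\psi=0$. With the triviality condition in hand, the strong $n$-Massey vanishing property combined with Proposition~\ref{prop:masse unip} produces homomorphisms $\rho_1,\rho_2\colon D_{2^\infty}\to\dbU_{n+1}$ whose $(i,i+1)$-projections are $\alpha_i$ and $\alpha_i'$ respectively; setting $A_i:=\rho_i(y)$, $B_i:=\rho_i(x)$ then delivers matrices of the required shape.

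The main obstacle is really just the short cohomological identification $\chi\smallsmile\psi=\psi^2$ (equivalently, the vanishing of the mixed cup product $\chi_1\smallsmile\chi_2$ in the free pro-$2$ product), which is immediate from the K\"unneth-type decomposition of $\rmH^2$ for a free pro-$p$ product; everything else is an automatic consequence of the Dwyer-type reinterpretation of Massey vanishing via unipotent representations (Proposition~\ref{prop:masse unip}) together with the standard two-generator presentation of $D_{2^\infty}$.
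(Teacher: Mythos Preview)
Your proof is correct and is essentially the same as the paper's own proof: both choose the alternating sequences $\alpha_i,\alpha_i'\in\{\chi+\psi,\psi\}$, verify $(\chi+\psi)\smallsmile\psi=0$ via the decomposition $\rmH^2(D_{2^\infty})=\F_2.\chi_1^2\oplus\F_2.\chi_2^2$, invoke the strong $n$-Massey vanishing property of $D_{2^\infty}=G_1\ast G_2$ together with Proposition~\ref{prop:masse unip} to obtain $\rho_1,\rho_2$, and set $A_i=\rho_i(y)$, $B_i=\rho_i(x)$ so that the relation $[x,y]=y^{-2}$ forces $[B_i,A_i]=A_i^{-2}$.
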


\begin{proof}
 Consider the sequences $\alpha_1,\ldots,\alpha_n$ and $\alpha'_1,\ldots,\alpha'_n$ of elements of $\rmH^1(D_{2^\infty})$ of length $n$,
 with $\alpha_i=\alpha_j'=\chi+\psi$, for $i$ odd and $j$ even, and $\alpha_i=\alpha_j'=\psi$ for $i$ even and $j$ odd.
 Then both sequences satisfy \eqref{eq:cup 0}, as 
 $$(\chi+\psi)\smallsmile\psi=\chi\smallsmile\psi+\psi^2=0.$$
Since $D_{2^\infty}$ has the strong $n$-Massey vanishing property, by Proposition~\ref{prop:masse unip} there are two continuous homomorphisms $\rho,\rho'\colon D_{2^\infty}\to\dbU_{n+1}$ satisfying $\rho_{i,i+1}=\alpha_i$ and $\rho'_{i,i+1}=\alpha'_i$ for all $i=1,\ldots,n$.
Set $A_1=\rho(y)$ and $A_2=\rho'(y)$, and $B_1=\rho(x)$ and $B_2=\rho'(x)$.
Then these matrices satisfy all the desired properties, as $[x,y]=y^{-2}$, and $\alpha_i(y)=\alpha'_i(y)=\psi(y)=1$ for all $i$, while $\alpha_1(x)=\alpha_2'(x)=\chi(x)=1$, and so on. 
 \end{proof}

\subsection{Semidirect products}

The goal of this subsection is to show that if $(G_0,\theta_0)$ is an oriented pro-2 group of elementary type with $\Img(\theta_0)\not\subseteq1+4\Z_2$ such that $G_0$ has the strong $n$-Massey vanishing property for every $n>2$, then the same holds for $Z\rtimes_{\theta_0}G_0$, with $Z\simeq\Z_2$.

Set $(G,\theta)=Z\rtimes_{\theta_0}(G_0,\theta_0)$, and let $z\in Z$ be a generator, and $\psi\colon Z\to\F_2$ the homomorphism dual to $\{z\}$ --- namely, $\psi(z)=1$, so that $\psi$ generates $\rmH^1(Z)$.
Then it is well-known that
\begin{equation}\label{eq:H1 semidirect}
 \rmH^1(G)=\rmH^1(G_0)\oplus\rmH^1(Z),
\end{equation}
while by the work of A.~Wadsworth (cf.  \cite[Cor.~3.4 and Thm.~3.6]{wadsworth}), one knows that
\begin{equation}\label{eq:H2 semidirect}
 \rmH^2(G)=\rmH^2(G_0)\oplus\left(\rmH^1(G_0)\smallsmile\psi\right),
\end{equation}
where the right-side summand denotes the subspace $\{\alpha\smallsmile\psi\:\mid\:\alpha\in\rmH^1(G_0)\}$,
and one has $\alpha\smallsmile\psi=0$ if, and only if, $\alpha=0$.
In particular, if $\{\chi_1,\ldots,\chi_d\}$ is a basis of $\rmH^1(G_0)$, then
$\{\chi_1\smallsmile\psi,\ldots,\chi_d\smallsmile\psi\}$ is a basis of $\rmH^1(G_0)\smallsmile\psi$.

\begin{rem}\label{rem:semid}\rm
By considering the elements of $\rmH^1(G_0)$ and of $\rmH^1(Z)$ as elements of $\rmH^1(G)$ too, we commit a slight abuse of notation: actually, if $\alpha\in\rmH^1(G_0)$, then we identify it with the element $\tilde\alpha$ of $\rmH^1(G)$ such that $\tilde\alpha\vert_{G_0}=\alpha$, and $\tilde\alpha\vert_Z$ is the 0-map, and analogously if $\alpha\in\rmH^1(Z)$.
\end{rem}

Since $(G_0,\theta_0)$ is a pro-2 group of elementary type, we take a minimal generating set $\calX$ of $G_0$ consisting of the union of the minimal generating sets of the building bricks --- free pro-2 groups and Demushkin pro-2 groups --- of $G_0$, together with generators of the pro-2-cyclic factors of the occurring semidirect products.
We write $\calX$ as follows:
\[
 \calX=\left\{\:u_1,\:\ldots,\:u_s,\:v_1,\:\ldots,\:v_t\:\right\},
\]
where
\begin{itemize}
 \item[(a)] $\theta_0(u_h)\equiv -1\bmod 4$ for all $h=1,\ldots,s$,
 \item[(b)] $\theta_0(v_l)\equiv 1\bmod 4$ for all $l=1,\ldots,t$. 
\end{itemize}
In particular, every generator $u_h$ is the first generator --- called $x_1$ in \S~\ref{ssec:Demushkin or} --- of a ``Demushkin brick'' of $G_0$ with defining relation different to case~(a), or a generator of a ``free brick'' (whose image via the orientation of the brick is equivalent to $-1$ mod 4).

The minimal generating set $\calX$ yields the dual basis 
$\mathcal{B}=\{\omega_1,\ldots,\omega_s,\varphi_1,\ldots,\varphi_t\}$ of $\rmH^1(G_0)$.

As a set of defining relations of $G_0$ we take the union of the defining relations of the ``Demushkin bricks'', together with the relations defining the action in the occurring semidirect products.

Altogether, a generator $u_h$ may show up in a defining relation of the following types:
\begin{equation}\label{eq:rel semidirect u}
 \begin{split}
 & u_h^2[v_{l_1},v_{l_2}]v_{l_2}^{-2^f}[v_{l_3},v_{l_4}]\cdots=1,\\
 & [u_h,v_{l_1}]v_{l_1}^2[v_{l_2},v_{l_3}]v_{l_3}^{-2^f}\cdots=1,\\
 & [u_h,v_{l_1}]v_{l_1}^{2-2^f}[v_{l_2},v_{l_3}]\cdots=1;
 \end{split}
\end{equation}
(where the first relation might be just $u_h^2=1$) and in at most one of the above not involving precisely two generators.
Analogously, a generator $v_l$ with $\theta_0(v_l)=1+2^f$, $2\leq f<\infty$, may show up in a defining relation of the type
\begin{equation}\label{rel semidirec v1}
 [v_l,v_{l_1}]v_{l_1}^{-2^f}[v_{l_2},v_{l_3}]\cdots =1,
\end{equation}
and in at most one of this involving more than two generators.
On the other hand, the defining relations involving the generators $v_l$'s with $\theta_0(v_l)=1$ are just products of elementary commutators involving these generators, namely
\begin{equation}\label{eq:rel semidirect v2}
 [v_{l_1},v_{l_2}]\cdots[v_{l_{m-1}},v_{l_m}]=1.
\end{equation}

\begin{rem}\label{rem:factors relations semidirect}\rm
 Every defining relation of $G_0$ is the product of factors 
 \begin{equation}\label{eq:factor rel}
  [y,v_l]v_l^{1-\theta_0(y)}\qquad \text{with }y\in\calX,\:v_l\in\Ker(\theta_0),
 \end{equation}
and, possibly, $u_h^2$ with $\theta_0(u_h)=-1$.
\end{rem}

\begin{exam}\rm
 Let $(G_0,\theta_0)$ be the oriented pro-2 group of elementary type constructed in Example~\ref{exam:et}.
 We may pick the following minimal generating set of $G_0$:
 \[
  \calX=\left\{\:u_1=x_1,u_2=x_5,v_1=x_2,v_2=x_3,v_3=x_4,v_4=x_6,v_5=z_1,v_6=z_2\:\right\}.
 \]
The generator $u_1$ is involved in the defining relations
\[
 u_1^2[v_1,v_2]=[u_1,v_5]v_5^{2}=[u_1,v_6]v_6^{2}=1,
\]
the generator $u_2$ is involved in the defining relations
\[
 [u_2,v_4]v_4^{2}=[u_2,v_6]v_6^2=1,
\]
while the generator $v_3$ is involved in the defining relations
\[
 [v_3,v_5]v_5^{-4}=[v_3,v_6]v_6^{-4}=1.
\]
\end{exam}

Now consider $G=Z\rtimes_{\theta_0}G_0$.
Proposition~\ref{prop:rel cup} implies that 
\[
 \psi^2=(\omega_1+\ldots+\omega_s)\smallsmile\psi\neq0,
\]
as $[u_1,z]z^{2}=\ldots=[u_s,z]z^2=1$.
In order to proceed toward our result, we change the minimal generating set of $G_0$.
Put $w_1=u_1$, and $w_h=u_1u_h$ for all $h=2,\ldots,s$.
Then 
\[
 \calX'=\left\{\:w_1,\:\ldots,\:w_s,\:v_1,\:\ldots,\:v_t\:\right\}
\]
is again a minimal generating set of $G_0$.
Moreover, put $\chi_0=\omega_1+\ldots+\omega_s$.
Then $\chi_0(w_1)=1$ and 
$$\chi_0(w_h)=\omega_1(w_h)+\omega_h(w_h)=\omega_1(u_1)+\omega_h(u_h)=1+1=0$$
for all $h=2,\ldots,s$, while for $h=2,\ldots,s$ one has $\omega_h(w_1)=\omega_h(u_1)=0$, and
\[
 \omega_h(w_{h'})=\begin{cases}
                   \omega_h(u_1)+\omega_h(u_h)=0+1=1 & \text{if }h'=h,\\
                   \omega_h(u_1)+\omega_h(u_{h'})=0+0=0 & \text{if }h'\neq h.                  \end{cases}\]
Therefore, $\{ \chi_0, \omega_2, \ldots, \omega_s, \varphi_1, \ldots, \varphi_t \}$ is the basis of $\rmH^1(G_0)$ dual to $\calX'$.

\begin{lem}\label{lem:semidirect alpha}
 Let $\alpha,\alpha'$ be elements of $\rmH^1(G)$, $\alpha,\alpha'\neq0$ (and possibly $\alpha=\alpha'$), such that $\alpha\smallsmile\alpha'=0$.
 Then either $\alpha,\alpha'\in\rmH^1(G_0)$, or $\alpha'=\alpha+\chi_0$ and $\alpha(z)=\alpha'(z)=1$.
\end{lem}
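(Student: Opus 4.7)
The plan is to split both $\alpha$ and $\alpha'$ via the Wadsworth decomposition \eqref{eq:H1 semidirect}, expand $\alpha\smallsmile\alpha'$, and use the decomposition \eqref{eq:H2 semidirect} to separate the resulting equation into two components. The core preliminary computation needed is to identify $\psi^2$ in terms of the distinguished basis of $\rmH^1(G_0)\smallsmile\psi$.

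The key step is to show $\psi^2=\chi_0\smallsmile\psi$. I would write $\psi^2=\gamma+\eta\smallsmile\psi$ according to \eqref{eq:H2 semidirect}, with $\gamma\in\rmH^2(G_0)$ and $\eta\in\rmH^1(G_0)$. Restricting along the splitting $G_0\hookrightarrow G$ sends $\psi$ to the zero map (by Remark~\ref{rem:semid}), hence $0=\res(\psi^2)=\gamma$, so $\gamma=0$. To pin down $\eta$, I apply Proposition~\ref{prop:rel cup} to the semidirect-product relations. Since $\theta_0(u_h)\equiv-1\bmod 4$ for all $h$ and $\theta_0(v_l)\equiv 1\bmod 4$ for all $l$, these reduce modulo $G^4[G^2,G^2]$ to $[u_h,z]z^2\equiv 1$ (for $h=1,\ldots,s$) and $[v_l,z]\equiv 1$ (for $l=1,\ldots,t$). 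The linear forms associated to the first type evaluate both $\omega_h\smallsmile\psi$ and $\psi^2$ to $1$, while those of the second type evaluate only $\varphi_l\smallsmile\psi$ to $1$; this forces the $\omega_h$-coefficient of $\eta$ to be $1$ for every $h$ and every $\varphi_l$-coefficient to be $0$, yielding $\eta=\omega_1+\cdots+\omega_s=\chi_0$.

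With $\psi^2=\chi_0\smallsmile\psi$ in hand, I would write $\alpha=\beta+a\psi$ and $\alpha'=\beta'+a'\psi$ with $\beta,\beta'\in\rmH^1(G_0)$ and $a,a'\in\F_2$, and expand (using that $\smallsmile$ is symmetric in characteristic $2$):
\[
\alpha\smallsmile\alpha'=\beta\smallsmile\beta'+(a\beta'+a'\beta+aa'\chi_0)\smallsmile\psi.
\]
Via \eqref{eq:H2 semidirect}, the hypothesis $\alpha\smallsmile\alpha'=0$ splits into $\beta\smallsmile\beta'=0$ in $\rmH^2(G_0)$ and $a\beta'+a'\beta+aa'\chi_0=0$ in $\rmH^1(G_0)$. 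A four-case analysis on $(a,a')\in\F_2^2$ then finishes: $(0,0)$ gives $\alpha,\alpha'\in\rmH^1(G_0)$; the mixed cases $(1,0)$ and $(0,1)$ force $\beta'=0$ or $\beta=0$, contradicting $\alpha,\alpha'\neq 0$; and $(1,1)$ gives $\beta'=\beta+\chi_0$, hence $\alpha'=\alpha+\chi_0$ with $\alpha(z)=\alpha'(z)=\psi(z)=1$.

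The main obstacle is the computation of $\psi^2$: correctly singling out the commutator relations that survive modulo $G^4[G^2,G^2]$ and reading off the cup products via Proposition~\ref{prop:rel cup}. The standing assumption $\Img(\theta_0)\not\subseteq 1+4\Z_2$ is precisely what guarantees $s\geq 1$, so that $\chi_0$ is a nonzero character and the dichotomy in the conclusion is genuine. Once Step~1 is settled, the remaining bookkeeping is routine linear algebra.
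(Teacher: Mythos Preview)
Your proof is correct and follows essentially the same strategy as the paper's: decompose $\alpha,\alpha'$ along \eqref{eq:H1 semidirect}, use $\psi^2=\chi_0\smallsmile\psi$ (which the paper establishes immediately before the lemma via the same appeal to Proposition~\ref{prop:rel cup}), expand the cup product, and separate into the two summands of \eqref{eq:H2 semidirect} to obtain a small linear system. The only cosmetic difference is that the paper further splits off the $\chi_0$-coefficient from the $\rmH^1(G_0)$-part (writing $\alpha=a\chi_0+\beta+b\psi$ with $\beta$ in the span of $\omega_2,\ldots,\varphi_t$), whereas you keep $\beta\in\rmH^1(G_0)$ whole; your choice yields the slightly tidier equation $a\beta'+a'\beta+aa'\chi_0=0$, but the case analysis and conclusion are identical.
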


\begin{proof}
 We keep the same notation as above.
We may write
\[
 \alpha=a\chi_0+\beta+b\psi\qquad\text{and}\qquad\alpha=a'\chi_0+\beta'+b'\psi,
\]
with $a,b,a',b'\in\F_2$ and $\beta,\beta'\in\Span(\omega_2,\ldots,\omega_s,\varphi_1,\ldots,\varphi_t)$, in a unique way.
Then we compute
\[
 \begin{split}
  0 &=\alpha\smallsmile\alpha'\\
  &=\left(aa'\chi_0^2+(a\beta'+a'\beta)\smallsmile\chi_0+\beta\smallsmile\beta'\right)+ \\ 
&\qquad\qquad\qquad\left((b\beta'+b'\beta)\smallsmile\psi+(ab'+a'b)\chi_0\smallsmile\psi+bb'\psi^2\right)\\
  &=\underbrace{\left((aa'\chi_0+a\beta'+a'\beta)\smallsmile\chi_0+\beta\smallsmile\beta'\right)}_{\in\rmH^2(G_0)}+
\underbrace{\left((b\beta'+b'\beta)+(ab'+a'b+bb')\chi_0\right)\smallsmile\psi}_{\in\rmH^1(G_0)\smallsmile\psi},
  \end{split}\]
as $\psi^2=\chi_0\smallsmile\psi$.  
Therefore both summands are 0.
In particular, the right-side summand is 0 if, and only if, 
\begin{equation}\label{eq:system}
 \begin{cases}
b\beta'+b'\beta=0   \\ab'+a'b+bb'=0,
  \end{cases}
\end{equation}
as  $\beta,\beta'\in\Span(\omega_2,\ldots,\varphi_t)$.
If $b,b'=0$ then $\alpha,\alpha'\in\rmH^1(G_0)$, and the system \eqref{eq:system} is satisfied.
So assume that not both $b,b'$ are 0.

Suppose first that $b=1$ and $b'=0$.
Then the fist equation of \eqref{eq:system} implies $\beta'=0$, and the second equation implies $a'=0$, namely, $\alpha'=0$, a contradiction.
Analogously, the case $b=0$ and $b'=1$ leads to $\alpha=0$, a contradiction.
Therefore, $b,b'=1$, and the fist equation of \eqref{eq:system} implies $\beta=\beta'$, while the second equation implies $a'=a+1$.
Hence $$\alpha'=(a+1)\chi_0+\beta+\psi=\alpha+\chi_0,$$
and $\alpha(z)=\alpha'(z)=\psi(z)=1$.
\end{proof}

We are ready to prove the following.

\begin{prop}\label{prop:semidirect prod}
 Let $(G_0,\theta_0)$ be an oriented pro-2 group of elementary type, with $\Img(\theta_0)\not\subseteq1+4\Z_2$.
 If $G_0$ has the strong $n$-Massey vanishing property for every $n>2$, then also the semidirect product $Z\rtimes_{\theta_0}G_0$, with $Z=\langle z\rangle\simeq\Z_2$, has the strong $n$-Massey vanishing property for every $n>2$.
\end{prop}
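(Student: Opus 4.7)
The plan is to invoke Proposition~\ref{prop:trivial strong} and work with a sequence $\alpha_1,\ldots,\alpha_n$ of non-trivial elements of $\rmH^1(G)$ satisfying $\alpha_i\smallsmile\alpha_{i+1}=0$ for every $i$, producing via Proposition~\ref{prop:masse unip} a homomorphism $\rho\colon G\to\dbU_{n+1}$ with $\rho_{i,i+1}=\alpha_i$. A consecutive-pair application of Lemma~\ref{lem:semidirect alpha} forces the following rigid dichotomy: either (A) every $\alpha_i$ lies in $\rmH^1(G_0)$, or (B) every $\alpha_i$ satisfies $\alpha_i(z)=1$ and the sequence alternates as $\alpha_{i+1}=\alpha_i+\chi_0$. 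In Case~(A) the strong $n$-Massey vanishing of $G_0$ provides $\rho_0\colon G_0\to\dbU_{n+1}$ with $(\rho_0)_{i,i+1}=\alpha_i$, and I would set $\rho=\rho_0\circ\pi$ where $\pi\colon G\twoheadrightarrow G_0$ is the canonical projection; the required $z$-entries are automatic since $\alpha_i(z)=0$.

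Case~(B) is the core of the argument. I would write $\alpha_1=\alpha+\psi$ with $\alpha\in\rmH^1(G_0)$, so that $\alpha_i=\alpha+\psi$ for odd $i$ and $\alpha_i=\alpha+\chi_0+\psi$ for even $i$. The identity $\psi^2=\chi_0\smallsmile\psi$, established just before Lemma~\ref{lem:semidirect alpha} from Proposition~\ref{prop:rel cup} applied to the action relations $[u_h,z]z^2=1$, together with skew-symmetry of the cup-product in characteristic~$2$, gives by direct expansion
\[
\alpha_i\smallsmile\alpha_{i+1} \;=\; \alpha\smallsmile(\alpha+\chi_0)
\]
inside $\rmH^2(G_0)\subseteq\rmH^2(G)$. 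Setting $\beta_i=\alpha$ for odd $i$ and $\beta_i=\alpha+\chi_0$ for even $i$, the hypothesis $\alpha_i\smallsmile\alpha_{i+1}=0$ becomes $\beta_i\smallsmile\beta_{i+1}=0$ in $\rmH^2(G_0)$, so the strong $n$-Massey vanishing property of $G_0$ yields a homomorphism $\rho_0\colon G_0\to\dbU_{n+1}$ with $(\rho_0)_{i,i+1}=\beta_i$.

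I would then extend $\rho_0$ to $G$ by declaring $\rho|_{G_0}=\rho_0$ and $\rho(z)=A$, where $A\in\dbU_{n+1}$ is chosen with all first-superdiagonal entries equal to $1$ so that $A_{i,i+1}=\alpha_i(z)$. Well-definedness of $\rho$ on the semidirect product $G=Z\rtimes_{\theta_0}G_0$ reduces to the compatibility $\rho_0(y)\,A\,\rho_0(y)^{-1}=A^{\theta_0(y)}$ for each $y\in\calX'$. These conditions split by the value of $\theta_0(y)$: $[\rho_0(y),A]=I$ for $y\in\Ker(\theta_0)$; $[\rho_0(y),A]=A^{2^f}\in\dbU_{(4)}$ when $\theta_0(y)=1+2^f$ with $f\geq2$, realised by correction matrices $C(2^f)$ supplied by Lemma~\ref{lemma:BAC}; and $[\rho_0(w_1),A]=A^{-2}$ when $\theta_0(w_1)=-1$, realised by one of the matrices $B_1,B_2$ of Lemma~\ref{lemma:B2 I}, whose alternating first superdiagonals match exactly the pattern of the values $\beta_i(w_1)$.

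The main obstacle is that the representation $\rho_0$ produced abstractly by the Massey vanishing property of $G_0$ need not satisfy these commutation identities. To get around this I would not use $G_0$'s property as a black box, but proceed by induction on the elementary-type construction of $G_0$, building $\rho_0$ brick by brick: on each free or Demushkin building brick of $G_0$ I would write down an explicit representation via Lemmas~\ref{lemma:BAC} and~\ref{lemma:B2 I} so that compatibility with $A$ is incorporated at every stage, while on the pro-$2$-cyclic factors of iterated semidirect products the inductive hypothesis applies. The defining relations of $G_0$, which by Remark~\ref{rem:factors relations semidirect} are assembled from factors of the shape $[y,v]v^{1-\theta_0(y)}$ and $u_h^2$, match precisely the commutator identities already enforced by the choices above in $\dbU_{n+1}$; thus those relations, and hence all relations of $G$, are satisfied by construction. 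The final equality $\rho_{i,i+1}=\alpha_i$ on $G$ then follows from $(\rho_0)_{i,i+1}=\beta_i$ on $\calX'$ and $A_{i,i+1}=1=\alpha_i(z)$.
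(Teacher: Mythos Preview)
Your proposal is correct and takes essentially the same approach as the paper: the dichotomy from Lemma~\ref{lem:semidirect alpha}, Case~(A) handled via composition with the projection $G\twoheadrightarrow G_0$, and in Case~(B) an explicit assignment of matrices to generators using Lemmas~\ref{lemma:B2 I} and~\ref{lemma:BAC}, with the defining relations of $G$ verified through the factorisation in Remark~\ref{rem:factors relations semidirect}. The paper is simply more direct in Case~(B) --- it skips your false start through an abstract $\rho_0$ produced by $G_0$'s Massey vanishing, and it needs no ``induction on the elementary-type construction'': working with the original generating set $\calX=\{u_1,\ldots,u_s,v_1,\ldots,v_t\}$ rather than $\calX'$, it assigns to each $u_h$ a matrix from $\{B,AB,BC(-2^f),ABC(-2^f)\}$ and to each $v_l$ a matrix from $\{I,A,C(2^f),AC(2^f)\}$ according to the values of $\alpha_1$ and $\theta_0$ on that generator, so that the inner cyclic semidirect factors of $G_0$ (which are just among the $v_l$ with $\theta_0(v_l)=1$) receive $I$ or $A$ and require no separate inductive step.
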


\begin{proof}
 We keep the same notation as above.
 Let $\alpha_1,\ldots,\alpha_n$ a sequence of elements of $\rmH^1(G)$ --- by Proposition~\ref{prop:trivial strong} we may assume that they are all different to 0 --- satisfying \eqref{eq:cup 0}.
 By Lemma~\ref{lem:semidirect alpha}, one has two cases: either $\alpha_1,\ldots,\alpha_n\in\rmH^1(G_0)$; or $\alpha_i\notin\rmH^1(G_0)$ for some $i$, and hence
 $\alpha_i=\alpha_1$ if $i$ is odd, and $\alpha_i=\alpha_1+\chi_0$ if $i$ is even, and moreover $\alpha_i(z)=1$ for all $i=1,\ldots,n$.

 Suppose we are in the first case --- namely, $\alpha_i\in\rmH^1(G_0)$ for all $i$'s.
 Since $G_0$ has the strong $n$-Massey vanishing property by hypothesis, the $n$-fold Massey product $\langle\alpha_1,\ldots,\alpha_n\rangle$ vanishes (in the $\F_2$-cohomology of $G_0$).
 Hence, by Proposition~\ref{prop:masse unip} there exists a homomorphism $$\bar\rho\colon G_0\longrightarrow\dbU_{n+1}$$ such that $\bar\rho_{i,i+1}=\alpha_i$ for every $i=1,\ldots,n$. 
We construct a homomorphism $\rho\colon G\to\dbU_{n+1}$ such that $\rho\vert_{G_0}=\bar\rho$, and $Z\subseteq\Ker(\rho)$.
 Then $\rho$ satisfies all the conditions prescribed by Proposition~\ref{prop:masse unip}, so that the $n$-fold Massey product $\langle\alpha_1,\ldots,\alpha_n\rangle$ vanishes also in the $\F_2$-cohomology of $G$.
 
 Suppose now that we are in the second case --- namely, $\alpha_i\not\in\rmH^1(G_0)$ for some $i$, so that $\alpha_2=\alpha_1+\chi_0$, $\alpha_3=\alpha_1$, and so on.
 Since $\Img(\theta_0)\not\subseteq1+4\Z_2$, $G_0$ has at least a generator $u_h\in\calX$ --- and thus a generator $u_1$ --- as above.
 If $\alpha_1(u_1)=1$, we put $A=A_1$ and $B=B_1$, while if $\alpha_1(u_1)=0$, we put $A=A_2$ and $B=B_2$, where $A_i,B_i$ are as in Lemma~\ref{lemma:B2 I}.
 Recall that ${}^BA=A^{-1}$. Then 
 $$[AB,A]={}^A[B,A]\cdot[A,A]={}^A\left(A^{-2}\right)\cdot I_{n+1}=A^{-2}.$$
 Our goal is to construct a homomorphism $\rho\colon G\to\dbU_{n+1}$ satisfying all the properties prescribed in Proposition~\ref{prop:masse unip}, by assigning a suitable matrix to every generator of $G$ lying in $\calX\cup\{z\}$.
 
 We start with $z$ and we set $\rho(z)=A$.
 Then we proceed with the generators $u_1,\ldots,u_s$.
 So, pick $u_h$ with $1\leq h\leq s$, and suppose first that $\theta_0(u_h)=-1$:
 \begin{itemize}
  \item[(a)] if $\alpha_1(h_h)=\alpha_1(u_1)$, set $\rho(u_h)=B$;
  \item[(b)] if $\alpha_1(h_h)\neq\alpha_1(u_1)$, set $\rho(u_h)=AB$ --- we underline that for every $i=1,\ldots,n$, if the $(i,i+1)$-entry of $B$ is 1, then the $(i,i+1)$-entry of $AB$ is 0, and vice versa.
 \end{itemize}
 In both cases one has 
\[[\rho(u_h),\rho(z)]=[B,A]=A^{-2}=\rho(z)^{\theta_0(u_h)-1},\]
and
\[  \rho(u_h)^2=
 \begin{cases} B^2=I_{n+1} &\text{in case (a)}, \\ ABAB=A\cdot A^{-1}=I_{n+1} &\text{in case (b)};
 \end{cases}\]
moreover $\rho(u_h)_{i,i+1}=\alpha_i(u_h)$ for all $i$'s.

Suppose now that $\theta_0(u_h)=-1+2^f$ with $2\leq f<\infty$:
 \begin{itemize}
  \item[(a)] if $\alpha_1(u_h)=\alpha_1(u_1)$, set $\rho(u_h)=BC(-2^f)$;
  \item[(b)] if $\alpha_1(u_h)\neq\alpha_1(u_1)$, set $\rho(u_h)=ABC(-2^f)$.
 \end{itemize}
 Since $C(-2^f)\in(\dbU_{n+1})_{(3)}$, the $(i,i+1)$-entry of $BC(-2^f)$, respectively of $ABC(-2^f)$, is the same as in $B$, respectively in $AB$, namely, $\alpha_i(u_h)$.
 In both cases one has 
\[[\rho(u_h),\rho(z)]=[BC(-2^f),A]={}^B\left(A^{-2^f}\right)\cdot A^{-2}=A^{2^f-2}=\rho(z)^{\theta_0(u_h)-1},\]
and moreover $\rho(u_h)_{i,i+1}=\alpha_i(u_h)$ for all $i$'s.

We are done with the generators $u_1,\ldots,u_s$, so we proceed with the generators $v_1,\ldots,v_t$.
Recall that the value of $\alpha_i(v_l)$ is the same for all $i$'s.
Pick $v_l$, and suppose that $\theta_0(v_l)=1+2^f$ with $2\leq f<\infty$:
 \begin{itemize}
  \item[(a)] if $\alpha_1(v_l)=0$, set $\rho(v_l)=C(2^f)$, whose $(i,i+1)$-entry is 0 for all $i$'s;
  \item[(b)] if $\alpha_1(v_l)=1$, set $\rho(v_l)=AC(2^f)$, whose $(i,i+1)$-entry is 1 for all $i$'s.
  \end{itemize}
 In both cases one has 
 $$[\rho(v_l),\rho(z)] =[C(2^f),A]=A^{2^f}=\rho(z)^{\theta_0(v_l)-1},$$
and moreover $\rho(v_l)_{i,i+1}=\alpha_i(v_l)$ for all $i$'s.

Suppose now that $\theta_0(v_l)=1$:
  \begin{itemize}
  \item[(a)] if $\alpha_1(v_l)=0$, set $\rho(v_l)=I_{n+1}$;
  \item[(b)] if $\alpha_1(v_l)=1$, set $\rho(v_l)=A$.
    \end{itemize}
 In both cases one has $[\rho(v_l),\rho(z)] =I_{n+1}=\rho(z)^{\theta_0(v_l)-1}$, and  
\[ 
[\rho(y),\rho(v_l)]= \begin{cases}[\rho(y),I_{n+1}]=I_{n+1}=\rho(v_l)^{\theta_0(y)-1} & \text{in case (a)},\\
 [\rho(y),\rho(z)]=\rho(z)^{\theta_0(y)-1}=\rho(v_l)^{\theta_0(y)-1} & \text{in case (b)},
\end{cases}
\]
for any $y\in\calX$,
and moreover $\rho(v_l)_{i,i+1}=\alpha_i(v_l)$ for all $i$'s.

Altogether, the matrices $\rho(y)$, with $y\in\calX$, and $\rho(z)$, satisfy the defining relations \eqref{eq:rel semidirect u}--\eqref{eq:rel semidirect v2} of $G_0$ (see also Remark~\ref{rem:factors relations semidirect}), and the relations $[y,z]z^{1-\theta_0(y)}=1$.
Therefore, the assignment 
$$z\longmapsto\rho(z),\qquad y\longmapsto \rho(y),\text{ with }y\in\calX$$
induces a homomorphism $\rho\colon G\to \dbU_{n+1}$, satisfying $\rho_{i,i+1}=\alpha_i$ for all $i=1,\ldots,n$.
Hence, the $n$-fold Massey product $\langle\alpha_1,\ldots,\alpha_n\rangle$ vanishes by Proposition~\ref{prop:masse unip}.
 \end{proof}
 

\subsection{Proofs of Theorems~1.2--1.3}

\begin{proof}[Proof of Theorem~1.3]
 We follow the recursive construction of an oriented pro-2 group of elementary type.
 
 If $(G,\theta)$ is an oriented pro-2 group with $G$ a finitely generated free pro-2 group, then $G$ has the strong $n$-Massey vanishing property for all $n>2$ by Proposition~\ref{prop:free}.
 If $(G,\theta)$ is an oriented pro-2 group with $G$ a Demushkin pro-2 group, then $G$ has the strong $n$-Massey vanishing property for all $n>2$ by Proposition~\ref{prop:Demushkin}.
 So, all building bricks of an oriented pro-2 group of elementary type have the strong $n$-Massey vanishing property for all $n>2$.
 
 If $(G_1,\theta_1),(G_2,\theta_2)$ are oriented pro-2 groups with $G_1,G_2$ both satisfying the strong $n$-Massey vanishing property for all $n>2$, then also the free pro-2 product $G_1\ast G_2$ has the strong $n$-Massey vanishing property for all $n>2$ by Proposition~\ref{prop:free}.
 
 If $(G_0,\theta_0)$ is an oriented pro-2 groups with $G_0$ satisfying the strong $n$-Massey vanishing property for all $n>2$, then also the semidirect product $Z\rtimes_{\theta_0} G_0$, with $Z\simeq \Z_2$, has the strong $n$-Massey vanishing property for all $n>2$ by Proposition~\ref{prop:semidirect prod}.
 
 Finally, if $H$ is an open subgroup of $G$ then it is finitely generated (cf., e.g., \cite[Prop.~1.7]{ddsms}), and thus 
 also the oriented pro-2 group $(H,\theta\vert_H)$ is of elementary type, and it has the strong $n$-Massey vanishing property for all $n>2$ by the argument above.
\end{proof}

\begin{proof}[Proof of Theorem~1.2]
 By \cite{minac:pyt}, if $\K$ is a Pythagorean field with a finite number of square classes, then $(G_{\K}(2),\theta_{\K})$ is an oriented pro-2 group of elementary type, and thus the maximal pro-2 Galois group $G_{\K}(2)$ has the strong $n$-Massey vanishing property for all $n>2$ by Theorem~\ref{thm:ET intro}.
 
 If $\mathbb{L}/\K$ is a finite extension, then the maximal pro-2 Galois group $G_{\mathbb{L}}(2)$ is an open subgroup of $G_{\K}(2)$, of index the maximal 2-power dividing $[\mathbb{L}:\K]$.
 Therefore, $G_{\mathbb{L}}(2)$ has the strong $n$-Massey vanishing property for all $n>2$ by Theorem~\ref{thm:ET intro}.
\end{proof}

Corollary~\ref{cor:etc} follows from Theorem~\ref{thm:ET intro} and Proposition~\ref{prop:etc fields}, with the same argument for the finite extensions as in the proof of Theorem~\ref{thm:main}.

By \cite[Example~A.15]{GPM}, and by \cite[Thm.~6.3]{MerSca1}, one knows that there are fields, with an infinite number of square classes, whose maximal pro-$2$ Galois group does not have the strong $n$-Massey vanishing property for $n=4$ --- for example, $\Q$ is one of them.
We ask whether, instead, a field with a finite number of square classes has maximal pro-2 Galois group with the strong $n$-Massey vanishing property for every $n>2$ --- as done in \cite[Question~1.5]{cq:massey} assuming further that the field contains $\sqrt{-1}$.

\begin{question}\label{ques:p2}
Let $\K$ be a field with a finite number of square classes.
Does the maximal pro-$2$ Galois group $G_{\K}(2)$ have the strong $n$-Massey vanishing property for every $n>2$?
\end{question}


\begin{bibdiv}
\begin{biblist}

\bib{BCQ:AbsGalType}{article}{
   author={Blumer, S.},
   author={Cassella, A.},
   author={Quadrelli, C.},
   title={Groups of $p$-absolute Galois type that are not absolute Galois
   groups},
   journal={J. Pure Appl. Algebra},
   volume={227},
   date={2023},
   number={4},
   pages={Paper No. 107262},
}

   

\bib{Demushkin}{article}{
   author={Demu\v{s}kin, S.P.},
   title={The group of a maximal $p$-extension of a local field},
   language={Russian},
   journal={Izv. Akad. Nauk SSSR Ser. Mat.},
   volume={25},
   date={1961},
   pages={329--346},}
	
\bib{ddsms}{book}{
   author={Dixon, J.D.},
   author={du Sautoy, M.P.F.},
   author={Mann, A.},
   author={Segal, D.},
   title={Analytic pro-$p$ groups},
   series={Cambridge Studies in Advanced Mathematics},
   volume={61},
   edition={2},
   publisher={Cambridge University Press, Cambridge},
   date={1999},
   pages={xviii+368},
   isbn={0-521-65011-9},
}

\bib{dwyer}{article}{
   author={Dwyer, W.G.},
   title={Homology, Massey products and maps between groups},
   journal={J. Pure Appl. Algebra},
   volume={6},
   date={1975},
   number={2},
   pages={177--190},
   issn={0022-4049},
}

\bib{ido:etc}{article}{
   author={Efrat, I.},
   title={Orderings, valuations, and free products of Galois groups},
   journal={Sem. Structure Alg\'ebriques Ordonn\'ees, Univ. Paris VII},
   date={1995},
}

\bib{ido:etc2}{article}{
   author={Efrat, I.},
   title={Pro-$p$ Galois groups of algebraic extensions of $\mathbf{Q}$},
   journal={J. Number Theory},
   volume={64},
   date={1997},
   number={1},
   pages={84--99},
}

\bib{efrat:small}{article}{
   author={Efrat, I.},
   title={Small maximal pro-$p$ Galois groups},
   journal={Manuscripta Math.},
   volume={95},
   date={1998},
   number={2},
   pages={237--249},
   issn={0025-2611},
}

\bib{ido:Hasse}{article}{
   author={Efrat, I.},
   title={A Hasse principle for function fields over PAC fields},
   journal={Israel J. Math.},
   volume={122},
   date={2001},
   pages={43--60},
   issn={0021-2172},
}

\bib{ido:Massey}{article}{
   author={Efrat, I.},
   title={The Zassenhaus filtration, Massey products, and homomorphisms of
   profinite groups},
   journal={Adv. Math.},
   volume={263},
   date={2014},
   pages={389--411},
}

\bib{idomatrix}{article}{
   author={Efrat, I.},
   title={The lower $p$-central series of a free profinite group and the
   shuffle algebra},
   journal={J. Pure Appl. Algebra},
   volume={224},
   date={2020},
   number={6},
   pages={106260, 13},
   issn={0022-4049},
}

\bib{EM:Massey}{article}{
   author={Efrat, I.},
   author={Matzri, E.},
   title={Triple Massey products and absolute Galois groups},
   journal={J. Eur. Math. Soc. (JEMS)},
   volume={19},
   date={2017},
   number={12},
   pages={3629--3640},
   issn={1435-9855},
}

\bib{idojan:descending}{article}{
   author={Efrat, I.},
   author={Mina\v{c}, J.},
   title={On the descending central sequence of absolute Galois groups},
   journal={Amer. J. Math.},
   volume={133},
   date={2011},
   number={6},
   pages={1503--1532},
}

\bib{eq:kummer}{article}{
   author={Efrat, I.},
   author={Quadrelli, C.},
   title={The Kummerian property and maximal pro-$p$ Galois groups},
   journal={J. Algebra},
   volume={525},
   date={2019},
   pages={284--310},
   issn={0021-8693},
}

\bib{elmanlam}{article}{
   author={Elman, R.},
   author={Lam, T.Y.},
   title={Quadratic forms over formally real fields and pythagorean fields},
   journal={Amer. J. Math.},
   volume={94},
   date={1972},
   pages={1155--1194},
   issn={0002-9327},}

\bib{friedjarden}{book}{
   author={Fried, M. D.},
   author={Jarden, M.},
   title={Field arithmetic},
   series={Ergebnisse der Mathematik und ihrer Grenzgebiete. 3. Folge. A
   Series of Modern Surveys in Mathematics},
   volume={11},
   edition={4},
   note={Revised by M.~Jarden},
   publisher={Springer, Cham},
   date={2023},
   pages={xxxi+827},
   isbn={978-3-031-28019-1},
   isbn={978-3-031-28020-7},
}

\bib{jochen}{article}{
   author={G\"{a}rtner, J.},
   title={Higher Massey products in the cohomology of mild pro-$p$-groups},
   journal={J. Algebra},
   volume={422},
   date={2015},
   pages={788--820},
   issn={0021-8693},
}

\bib{GPM}{article}{
   author={Guillot, P.},
   author={Mina\v{c}, J.},
   author={Topaz, A.},
   title={Four-fold Massey products in Galois cohomology},
   note={With an appendix by O.~Wittenberg},
   journal={Compos. Math.},
   volume={154},
   date={2018},
   number={9},
   pages={1921--1959},
}

	\bib{PJ}{article}{
   author={Guillot, P.},
   author={Mina\v{c}, J.},
   title={Extensions of unipotent groups, Massey products and Galois theory},
   journal={Adv. Math.},
   volume={354},
   date={2019},
   pages={article no. 106748},
}

\bib{HW:Massey}{article}{
   author={Harpaz, Y.},
   author={Wittenberg, O.},
   title={The Massey vanishing conjecture for number fields},
   journal={Duke Math. J.},
   volume={172},
   date={2023},
   number={1},
   pages={1--41},
}

\bib{hopwick}{article}{
   author={Hopkins, M.J.},
   author={Wickelgren, K.G.},
   title={Splitting varieties for triple Massey products},
   journal={J. Pure Appl. Algebra},
   volume={219},
   date={2015},
   number={5},
   pages={1304--1319},
}

\bib{jacob1}{article}{
   author={Jacob, B.},
   title={On the structure of Pythagorean fields},
   journal={J. Algebra},
   volume={68},
   date={1981},
   number={2},
   pages={247--267},
   issn={0021-8693},}
   
\bib{jacob2}{article}{
   author={Jacob, B.},
   title={The Galois cohomology of Pythagorean fields},
   journal={Invent. Math.},
   volume={65},
   date={1981/82},
   number={1},
   pages={97--113},
   issn={0020-9910},}

\bib{jacobware}{article}{
   author={Jacob, B.},
   author={Ware, R.},
   title={A recursive description of the maximal pro-$2$ Galois group via
   Witt rings},
   journal={Math. Z.},
   volume={200},
   date={1989},
   number={3},
   pages={379--396},
   issn={0025-5874},}

	
\bib{labute}{article}{
   author={Labute, J.P.},
   title={Classification of Demushkin groups},
   journal={Canadian J. Math.},
   volume={19},
   date={1967},
   pages={106--132},
   issn={0008-414X},
}
		
	\bib{lam}{book}{
   author={Lam, T.Y.},
   title={Introduction to quadratic forms over fields},
   series={Grad. Stud. Math.},
   volume={67},
   publisher={American Mathematical Society, Providence, RI},
   date={2005},
}

\bib{LLSWW}{article}{
   author={Lam, Y.H.J.},
   author={Liu, Y.},
   author={Sharifi, R.T.},
   author={Wake, P.},
   author={Wang, J.},
   title={Generalized Bockstein maps and Massey products},
   date={2023},
   journal={Forum Math. Sigma},
   volume={11},
   date={2023},
   pages={Paper No. e5},
}

\bib{marshall}{article}{
   author={Marshall, M.},
   title={The elementary type conjecture in quadratic form theory},
   conference={
      title={Algebraic and arithmetic theory of quadratic forms},
   },
   book={
      series={Contemp. Math.},
      volume={344},
      publisher={Amer. Math. Soc., Providence, RI},
   },
   date={2004},
   pages={275--293},}

\bib{eli:Massey}{unpublished}{
   author={Matzri, E.},
   title={Triple Massey products in Galois cohomology},
   date={2014},
   note={Preprint, available at {\tt arXiv:1411.4146}},
}

\bib{eli2}{article}{
   author={Matzri, E.},
   title={Triple Massey products of weight $(1,n,1)$ in Galois cohomology},
   journal={J. Algebra},
   volume={499},
   date={2018},
   pages={272--280},
}

\bib{eli3}{article}{
   author={Matzri, E.},
   title={Higher triple Massey products and symbols},
   journal={J. Algebra},
   volume={527},
   date={2019},
   pages={136--146},
   issn={0021-8693},}

\bib{MerSca1}{unpublished}{
   author={Merkurjev, A.},
   author={Scavia, F.},
   title={Degenerate fourfold Massey products over arbitrary fields},
   date={2022},
   note={Preprint, available at {\tt arXiv:2208.13011}},
}

\bib{MerSca2}{unpublished}{
   author={Merkurjev, A.},
   author={Scavia, F.},
   title={The Massey Vanishing Conjecture for fourfold Massey products modulo 2},
   date={2023},
   note={Preprint, available at {\tt arXiv:2301.09290}},
}

\bib{minac:pyt}{article}{
   author={Mina\v{c}, J.},
   title={Galois groups of some $2$-extensions of ordered fields},
   journal={C. R. Math. Rep. Acad. Sci. Canada},
   volume={8},
   date={1986},
   number={2},
   pages={103--108},
}
		
\bib{MPQT}{article}{
   author={Mina\v{c}, J.},
   author={Pasini, F.W.},
   author={Quadrelli, C.},
   author={T\^{a}n, N.D.},
   title={Koszul algebras and quadratic duals in Galois cohomology},
   journal={Adv. Math.},
   volume={380},
   date={2021},
   pages={article no. 107569},
}

   \bib{mt:conj}{article}{
   author={Mina\v{c}, J.},
   author={T\^{a}n, N.D.},
   title={The kernel unipotent conjecture and the vanishing of Massey
   products for odd rigid fields},
   journal={Adv. Math.},
   volume={273},
   date={2015},
   pages={242--270},
}

\bib{mt:docu}{article}{
   author={Mina\v{c}, J.},
   author={T\^{a}n, N.D.},
   title={Triple Massey products over global fields},
   journal={Doc. Math.},
   volume={20},
   date={2015},
   pages={1467--1480},
}

\bib{MT:Masseyall}{article}{
   author={Mina\v{c}, J.},
   author={T\^{a}n, N.D.},
   title={Triple Massey products vanish over all fields},
   journal={J. London Math. Soc.},
   volume={94},
   date={2016},
   pages={909--932},
}

   \bib{JT:U4}{article}{
   author={Mina\v{c}, J.},
   author={T\^{a}n, N.D.},
   title={Counting Galois $\dbU_4(\F_p)$-extensions using Massey products},
   journal={J. Number Theory},
   volume={176},
   date={2017},
   pages={76--112},
   issn={0022-314X},
}

\bib{mt:Massey}{article}{
   author={Mina\v{c}, J.},
   author={T\^{a}n, N.D.},
   title={Triple Massey products and Galois theory},
   journal={J. Eur. Math. Soc. (JEMS)},
   volume={19},
   date={2017},
   number={1},
   pages={255--284},
   issn={1435-9855},
}

\bib{nsw:cohn}{book}{
   author={Neukirch, J.},
   author={Schmidt, A.},
   author={Wingberg, K.},
   title={Cohomology of number fields},
   series={Grundlehren der Mathematischen Wissenschaften},
   volume={323},
   edition={2},
   publisher={Springer-Verlag, Berlin},
   date={2008},
   pages={xvi+825},
   isbn={978-3-540-37888-4},}

\bib{palquick:Massey}{unpublished}{
   author={P\'al, A.},
   author={Quick, G.},
   title={Real projective groups are formal},
   date={2022},
   note={Preprint, available at {\tt arXiv:2206.14645}},
}
  
\bib{pal:Massey}{unpublished}{
   author={P\'al, A.},
   author={Szab\'o, E.},
   title={The strong Massey vanishing conjecture for fields with virtual cohomological dimension at most 1},
   date={2020},
   note={Preprint, available at {\tt arXiv:1811.06192}},
}
  

\bib{cq:massey}{article}{
   author={Quadrelli, C.},
   title={Massey products in Galois cohomology and the elementary type conjecture},
   date={2024},
   journal={J. Number Theory},
   volume={258},
   pages={40--65}
}
  
\bib{qw:cyc}{article}{
   author={Quadrelli, C.},
   author={Weigel, Th.S.},
   title={Profinite groups with a cyclotomic $p$-orientation},
   date={2020},
   volume={25},
   journal={Doc. Math.},
   pages={1881--1916}
   }

\bib{ribzal}{book}{
   author={Ribes, L},
   author={Zalesski\u{\i}, P.A.},
   title={Profinite groups},
   series={Ergebnisse der Mathematik und ihrer Grenzgebiete. 3. Folge. A
   Series of Modern Surveys in Mathematics},
   volume={40},
   edition={2},
   publisher={Springer-Verlag, Berlin},
   date={2010},
   pages={xvi+464},
}

   \bib{serre:Demushkin}{article}{
   author={Serre, J-P.},
   title={Structure de certains pro-$p$-groupes (d'apr\`es Demu\v{s}kin)},
   language={French},
   conference={
      title={S\'{e}minaire Bourbaki, Vol. 8},
   },
   book={
      publisher={Soc. Math. France, Paris},
   },
   date={1995},
   pages={Exp. No. 252, 145--155},
}
		
\bib{serre:galc}{book}{
   author={Serre, J-P.},
   title={Galois cohomology},
   series={Springer Monographs in Mathematics},
   edition={Corrected reprint of the 1997 English edition},
   note={Translated from the French by Patrick Ion and revised by the
   author},
   publisher={Springer-Verlag, Berlin},
   date={2002},
   pages={x+210},
   isbn={3-540-42192-0},}

   \bib{vogel}{report}{
   author={Vogel, D.},
   title={Massey products in the Galois cohomology of number fields},
   date={2004},
   note={PhD thesis, University of Heidelberg},
   eprint={http://www.ub.uni-heidelberg.de/archiv/4418},
}
	
%

\bib{wadsworth}{article}{
   author={Wadsworth, A.},
   title={$p$-Henselian field: $K$-theory, Galois cohomology, and graded
   Witt rings},
   journal={Pacific J. Math.},
   volume={105},
   date={1983},
   number={2},
   pages={473--496},
}
		
\bib{ware}{article}{
   author={Ware, R.},
   title={Galois groups of maximal $p$-extensions},
   journal={Trans. Amer. Math. Soc.},
   volume={333},
   date={1992},
   number={2},
   pages={721--728},
}

\bib{wick}{article}{
   author={Wickelgren, K.},
   title={Massey products $\langle y$, $x$, $x$, $\ldots$, $x$, $x$,
   $y\rangle$ in Galois cohomology via rational points},
   journal={J. Pure Appl. Algebra},
   volume={221},
   date={2017},
   number={7},
   pages={1845--1866},
   issn={0022-4049},
}

\end{biblist}
\end{bibdiv}
\end{document}